\def\MR#1{}
\newcolumntype{M}[1]{>{\centering\arraybackslash}m{#1}} 
\DeclareFontFamily{OMS}{rsfs}{\skewchar\font'60}
\DeclareFontShape{OMS}{rsfs}{m}{n}{<-5>rsfs5 <5-7>rsfs7 <7->rsfs10 }{}
\DeclareSymbolFont{rsfs}{OMS}{rsfs}{m}{n}
\DeclareSymbolFontAlphabet{\scr}{rsfs}
\DeclareSymbolFontAlphabet{\scr}{rsfs}
\newcommand{\p}[0]{{\mathbb P}}
\renewcommand{\phi}{\varphi}
\newcommand{\D}{\Delta}
\newcommand{\cF}{\mathcal{F}}
\newcommand{\cG}{\mathcal{G}}
\newcommand{\sO}{\mathscr{O}}
\newtheorem{thm}{Theorem}[section]
\newtheorem{lemma}[thm]{Lemma}
\newtheorem{cor}[thm]{Corollary}
\newtheorem{prop}[thm]{Proposition}
\newtheorem*{defn*}{Definition}
\newtheorem*{thm*}{Theorem}
\theoremstyle{definition}
\newtheorem{defn-thm}[thm]{Definition-Theorem} 
\newtheorem{defn-lemma}[thm]{Definition-Lemma}
\theoremstyle{remark}
\newtheorem{rem}[thm]{Remark}
\newtheorem*{not-and-def}{Notation and definitions}
\numberwithin{equation}{section}
\begin{document}

\title{On generic  nefness of tangent sheaves}

\author{Wenhao Ou}

\address{Wenhao Ou, Institute of Mathematics, Academy of Mathematics and Systems Science, Chinese Academy of Sciences, Beijing, 100190, China}
\email{wenhaoou@amss.ac.cn}

\subjclass[2010]{14E99}

\begin{abstract}
We show that the tangent bundle of a projective manifold with nef anticanonical class is generically nef. That is, its restriction to a  curve  cut out by general sufficiently ample divisors is a nef vector bundle. This confirms a conjecture of Peternell.  As a consequence, the second Chern class of such a manifold has non-negative intersections with ample divisors. We also investigate under which conditions these positivities are strict, and answer a question of Yau.
\end{abstract}

\maketitle
 \setcounter{tocdepth}{1}
\tableofcontents

\section{Introduction}
From the viewpoint of the minimal model program, complex projective manifolds $X$ could be birationally classified   according to the sign of the canonical class $K_X$. It is natural to ask how far we can lift the positivity (or the negativity) of $K_X$ to the cotangent sheaf $\Omega_X^1$. The following two theorems were due to Miyaoka (See \cite[Corollary 6.4]{Miy87}).

\begin{thm}
\label{thm-Miya}
Let $X$ be a complex projective manifold   such that $K_X$ is pseudoeffective. Then the sheaf $\Omega_X^1$ is generically nef. That is, $\Omega_X^1|_C$  is a nef vector bundle for any Mehta-Ramanathan-general curve  $C$.  
\end{thm}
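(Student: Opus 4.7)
The plan is to argue by the contrapositive: I would show that if $\Omega_X^1$ is not generically nef then $X$ is uniruled, which contradicts pseudoeffectiveness of $K_X$ since a covering family of rational curves $\{\ell_t\}$ satisfies $K_X\cdot\ell_t<0$ and hence pairs negatively with a movable class. This last implication (uniruled $\Rightarrow$ $K_X$ not pseudoeffective) is standard; the real content is to propagate non-nefness of $\Omega_X^1$ on a single general curve all the way to a rational curve through a general point of $X$.

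Fix an ample class $H$ on $X$ and let $C=D_1\cap\cdots\cap D_{n-1}$ with $D_i\in|m_iH|$ very general and $m_i\gg 0$. By Mehta--Ramanathan, the Harder--Narasimhan filtration of $\Omega_X^1$ with respect to $H$ is intrinsic to $(X,H)$ and restricts term by term to the Harder--Narasimhan filtration of $\Omega_X^1|_C$. So non-nefness of $\Omega_X^1|_C$ forces the existence of a torsion-free quotient $\Omega_X^1\twoheadrightarrow\cQ$ with $\mu_H(\cQ)<0$. Dualising and saturating produces a subsheaf $\cF\subset T_X$ with $\mu_H(\cF)>0$, and I would replace $\cF$ by the maximal $H$-destabilizing subsheaf of $T_X$, so that $\cF$ is in addition $H$-semistable of strictly positive slope.

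The decisive step is then to invoke two results of Miyaoka. First, the maximal $H$-destabilizing subsheaf of $T_X$ is automatically closed under the Lie bracket; the $\cO_X$-linear obstruction $\Lambda^2\cF\to T_X/\cF$ must vanish by comparing slopes of the source and target via semistability. Hence $\cF$ defines a foliation on $X$. Second, the positive-slope uniruledness criterion: any foliation $\cF\subset T_X$ with $\mu_{\min,H}(\cF)>0$ on a projective manifold in characteristic zero forces $X$ to be covered by rational curves tangent to $\cF$. I expect this second input to be the main obstacle, as its proof goes through characteristic $p$: one spreads $(X,\cF,H)$ out, uses Frobenius pullbacks to amplify the positivity of $\cF$ until the leaves are forced to contain enough sections of a sufficiently positive invertible sheaf to apply bend-and-break, and finally lifts the resulting rational curves back to characteristic zero. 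Once $X$ is uniruled, the contradiction with $K_X$ pseudoeffective noted at the outset closes the argument.
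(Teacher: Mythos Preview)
The paper does not supply its own proof of this statement; it is quoted as a background result of Miyaoka \cite[Corollary 6.4]{Miy87}. Your outline is essentially Miyaoka's original argument and is correct: non-nefness of $\Omega_X^1|_C$ passes via Mehta--Ramanathan to $\mu_{\alpha,\max}(T_X)>0$; the maximal destabilizing subsheaf $\cF\subset T_X$ is then a foliation because $2\mu_{\alpha,\min}(\cF)=2\mu_\alpha(\cF)>\mu_\alpha(\cF)>\mu_{\alpha,\max}(T_X/\cF)$ kills the $\cO_X$-linear bracket map; and Miyaoka's positive-slope criterion (reduction mod $p$, Frobenius amplification, bend-and-break) forces uniruledness, contradicting pseudoeffectivity of $K_X$. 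One small correction: the paper's definition of a Mehta--Ramanathan-general curve allows the $D_i$ to come from distinct ample classes $H_1,\dots,H_{n-1}$, not a single $H$; your argument works verbatim with $\alpha=H_1\cdots H_{n-1}$ in place of $H^{n-1}$.

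It is worth noting that the paper's own contributions (Theorem \ref{thm-main-mov} for the dual situation $-K_X$ nef) follow the same template, with Miyaoka's characteristic-$p$ criterion replaced by the Campana--P\u aun algebraicity theorem (Theorem \ref{thm-CP-foli}, Proposition \ref{prop-CP-alg-foliation-sing}); so your sketch is not only correct but also aligned with the methods the paper later deploys.
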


\begin{thm}
\label{thm-Miya2}
Let $X$ be a complex projective manifold of dimension $n$ such that  $K_X$ is nef. Then for any ample  divisors $H_1,...,H_{n-2}$, we have $$c_2(\Omega_X^1)\cdot H_1\cdot \cdots \cdot H_{n-2}\geqslant 0,$$ where $c_2$  stands for the second Chern class. 
\end{thm}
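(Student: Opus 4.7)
The plan is to reduce to the surface case by taking a general complete intersection surface, then apply the Harder--Narasimhan filtration of $\Omega_X^1$ combined with Bogomolov's inequality on each graded piece, feeding in the generic nefness from Theorem \ref{thm-Miya}.

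Let $S = H_1 \cap \cdots \cap H_{n-2}$ be a smooth complete intersection surface (sufficiently general by Bertini); then $c_2(\Omega_X^1) \cdot H_1 \cdots H_{n-2} = c_2(\Omega_X^1|_S)$, so it suffices to show $c_2(\Omega_X^1|_S) \geq 0$. Consider the Harder--Narasimhan filtration $0 = F_0 \subset F_1 \subset \cdots \subset F_k = \Omega_X^1$ with respect to the multipolarization $(H_1, \ldots, H_{n-1})$, whose semistable graded pieces $Q_i = F_i/F_{i-1}$ have rank $r_i$ and slope $\mu_i$, with $\mu_1 > \cdots > \mu_k$. Applying Theorem \ref{thm-Miya} to a Mehta--Ramanathan-general curve shows $\mu_k \geq 0$, so every $\mu_i \geq 0$. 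By Mehta--Ramanathan, for general $S$ the restriction $F_\bullet|_S$ is the HN filtration of $\Omega_X^1|_S$ with each $Q_i|_S$ semistable with respect to $h := H_{n-1}|_S$; Bogomolov's inequality then gives $2 r_i c_2(Q_i|_S) \geq (r_i - 1) c_1(Q_i|_S)^2$ for each $i$. Expanding via Whitney's formula yields
\[
c_2(\Omega_X^1|_S) \;\geq\; \tfrac{1}{2}\Bigl[\,c_1(\Omega_X^1|_S)^2 - \textstyle\sum_i c_1(Q_i|_S)^2/r_i\,\Bigr].
\]

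It remains to bound the right-hand side below by zero. The nefness of $K_X$ gives $c_1(\Omega_X^1|_S)^2 = K_X^2 \cdot H_1 \cdots H_{n-2} \geq 0$. Writing $c_1(Q_i|_S) = a_i h + u_i$ orthogonally with $u_i \cdot h = 0$ on $S$, one has $a_i = r_i \mu_i/h^2 \geq 0$ and $u_i^2 \leq 0$ by Hodge index; the $h$-component of the bracket is non-negative because $a_i \geq 0$ and $r_i \geq 1$ imply $\bigl(\sum_i a_i\bigr)^2 \geq \sum_i a_i^2/r_i$. The $h^\perp$-component $\bigl(\sum_i u_i\bigr)^2 - \sum_i u_i^2/r_i$ lies in the negative-definite subspace $h^\perp$, and must be controlled using both the Bogomolov slack from each HN piece and the nefness bound $\bigl|\sum_i u_i\bigr|^2 \leq \bigl(\sum_i a_i\bigr)^2 h^2$ inherited from $K_X^2 \cdot H_1 \cdots H_{n-2} \geq 0$. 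The main obstacle is precisely this $h^\perp$-analysis: a naive Cauchy--Schwarz yields only $\sum_i |u_i|^2/r_i \geq \bigl|\sum_i u_i\bigr|^2/r$, losing a factor of $r$ compared with what is required. Closing the gap is the core technical step of Miyaoka's argument, combining the Bogomolov slack from each piece with the strict slope ordering $\mu_1 > \cdots > \mu_k \geq 0$ and the nefness of $K_X$ to absorb the leftover negative $h^\perp$-contribution into the positive Bogomolov discrepancy.
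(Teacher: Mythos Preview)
Your proof has a genuine gap, and the gap is not merely one of exposition: the last paragraph is a hand-wave that does not close, and the reason is that you have chosen the wrong polarization.

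The paper does not give its own proof of this statement; it is quoted as a result of Miyaoka (\cite[Corollary 6.4]{Miy87}), and the mechanism it relies on---Miyaoka's inequality \cite[Theorem 6.1]{Miy87}---is used as a black box in the proof of Corollary~\ref{cor-c2}. However, the paper does reproduce Miyaoka's argument in detail when analysing the equality case (see the proof of Proposition~\ref{prop-HN-E-ne-0-nd>1}), and comparing with that makes the issue clear. Miyaoka does \emph{not} take the Harder--Narasimhan filtration with respect to an auxiliary ample class $H_{n-1}$; he takes it with respect to the class $\alpha = c_1(E)\cdot H_1\cdots H_{n-2}$ (or its perturbation $(c_1(E)+\epsilon H)\cdot H_1\cdots H_{n-2}$ when $c_1(E)^2\cdot S=0$). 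With that choice, the slopes are $\mu_\alpha(G_i)=c_1(G_i)\cdot c_1(E)\cdot S$, so the Hodge index theorem, which bounds $c_1(G_i)^2\cdot S$ in terms of $(c_1(G_i)\cdot c_1(E)\cdot S)^2$, speaks \emph{directly} about the slopes. Setting $a_i=\mu_\alpha(G_i)/(c_1(E)^2\cdot S)$ one has $\sum r_ia_i=1$, $a_i\ge 0$, and a short chain of inequalities gives
\[
2\,c_2(E)\cdot S \;\ge\; \Bigl(1-\sum_i r_ia_i^2\Bigr)\,c_1(E)^2\cdot S \;\ge\; (1-a_1)\,c_1(E)^2\cdot S \;\ge\; 0,
\]
with no separate $h^\perp$-analysis at all.

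By contrast, with your polarization $h=H_{n-1}|_S$ the positivity $\mu_i\ge 0$ only says $c_1(Q_i)\cdot h\ge 0$, which is orthogonal to what Hodge index gives you, and that is exactly why you are left with an uncontrolled $h^\perp$-term. Your suggested fix---``absorb the leftover negative $h^\perp$-contribution into the positive Bogomolov discrepancy via the strict slope ordering''---is not how Miyaoka's proof works, and I do not see how to make it work: already in the rank-two case with two rank-one graded pieces the Bogomolov inequality is vacuous on each piece, there is no slack, and from $L_1\cdot h\ge L_2\cdot h\ge 0$ together with $L_1+L_2$ nef alone one cannot conclude $L_1\cdot L_2\ge 0$. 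The missing input is precisely that generic nefness holds for \emph{every} nef polarization, in particular for $c_1(E)$ itself; you are using it only for one ample class $h$. Switch the polarization to $c_1(E)\cdot S$ and the argument closes immediately, as in the paper's Proposition~\ref{prop-HN-E-ne-0-nd>1}.
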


A Mehta-Ramanathan-general curve $C$ is the complete intersection of $n-1$ sufficiently ample divisors in general positions, where $n$ is the dimension of $X$.  In this  paper, we are interested in complex projective varieties with nef anticanonical class $-K_X$. These varieties have been broadly studied,  \textit{e.g.}  \cite{DPS93}, \cite{DPS94}, \cite{DPS96}, \cite{Zhang96}, \cite{Pau97}, \cite{DPS01}, \cite{Zhang05}, \cite{Pau12}, \cite{Pet12},   \cite{CDP15}, \cite{CH17} and \cite{Cao19}.   Based on their works, we prove the following theorem.

\begin{thm}
\label{thm-main}
Let $X$ be a complex projective variety of dimension at least $2$ with $\mathbb{Q}$-factorial log canonical singularities. Assume that  that $-K_X$ is nef.  Then the reflexive tangent sheaf $T_X$ is generically nef. That is, $T_X|_C$  is a nef vector bundle for a Mehta-Ramanathan-general curve  $C$.  
\end{thm}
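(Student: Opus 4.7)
The natural strategy is to argue by contradiction using Harder-Narasimhan theory together with the Campana-P\u aun-Druel theorem on algebraically integrable foliations. By Mehta-Ramanathan, generic nefness of $T_X$ against the polarization $\alpha = H_1 \cdots H_{n-1}$ is equivalent to the non-negativity of the minimal slope in the Harder-Narasimhan filtration of $T_X$ with respect to $\alpha$. I would therefore assume for contradiction that there is a saturated torsion-free quotient $T_X \twoheadrightarrow Q$ with $\mu_\alpha(Q) < 0$; equivalently, a saturated subsheaf $\cF \subsetneq T_X$ with $\mu_\alpha(\cF) > 0$, i.e.\ $-K_\cF \cdot \alpha > 0$, where $K_\cF := -c_1(\cF)$.

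The next step is to upgrade $\cF$ to a foliation. Taking $\cF$ to be the maximal destabilizing subsheaf of $T_X$, the standard bracket-slope inequality (going back to Miyaoka and refined by Campana-P\u aun) shows that $\cF$ is closed under the Lie bracket, and hence defines a foliation on the smooth locus of $X$. One then invokes the Campana-P\u aun theorem, extended by Druel to the $\bQ$-factorial log canonical setting: a foliation $\cF \subseteq T_X$ whose canonical class $K_\cF$ is not pseudo-effective (or, as in our situation, satisfies $K_\cF \cdot \alpha < 0$ for some movable class $\alpha$) is algebraically integrable and its general leaves have rationally connected closures. This produces a dominant rational map $\phi \colon X \map Y$ whose general fiber is the closure of a leaf, and is therefore rationally connected.

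Deriving a contradiction from this fibration structure is the main obstacle. Passing to a resolution of $\phi$ and using the relative tangent sequence, the quotient $T_X/\cF$ corresponds generically to $\phi^* T_Y$, so $\mu_\alpha(T_X/\cF) < 0$ forces $K_Y$ to be positive against the push-forward of $\alpha$. On the other hand, the nefness of $-K_X$ should -- via Zhang's theorem and the Cao-H\"oring structure results for varieties with nef anticanonical class -- force $-K_Y$ to remain pseudo-effective; indeed, for the MRC quotient of $X$ the base even has $K_Y \equiv 0$. The plan is to combine these two pieces of information with a comparison between $\cF$ and the MRC foliation of $X$, while carefully tracking the singularities introduced by the resolution, so as to reach the desired contradiction. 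The delicate point throughout is that the general leaves and the base $Y$ may inherit non-trivial singularities from $X$, which is precisely why the reflexive formalism for $T_X$ and Druel's extension of Campana-P\u aun to the log canonical setting are both indispensable.
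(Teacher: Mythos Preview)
Your opening strategy matches the paper's: reduce via Mehta--Ramanathan to $\mu_{\alpha,\min}(T_X)\ge 0$, use the Harder--Narasimhan filtration to produce a foliation, and invoke Campana--P\u aun to get algebraic integrability. Two points, however, need correction.

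First, a minor one: taking $\cF$ to be the \emph{maximal} destabilizing subsheaf $E_1$ guarantees $\mu_{\alpha,\min}(\cF)>0$ and hence algebraicity, but it does \emph{not} guarantee $\mu_\alpha(T_X/\cF)<0$, which you use later. The paper handles this with a small lemma (choose the largest $k$ with $\mu_\alpha(T_X/E_k)<0$; then $\mu_{\alpha,\min}(E_k)=\mu_\alpha(E_k/E_{k-1})>0$ and the slope inequalities needed for Campana--P\u aun still hold). You should make the same choice.

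Second, and this is the real gap: your plan for the contradiction is not the one that works. You want to pass from $\mu_\alpha(T_X/\cF)<0$ to positivity of $K_Y$ against $\phi_*\alpha$, and then contradict this using Zhang or Cao--H\"oring. But $c_1(T_X/\cF)=K_\cF-K_X$ differs from $-\phi^*K_Y$ by a ramification-type term supported on the singular fibres, whose sign goes the wrong way for your argument; and the foliation $\cF$ produced by Harder--Narasimhan has no reason to be the MRC foliation, so the statement ``$K_Y\equiv 0$ for the MRC base'' is not directly applicable. The paper bypasses $Y$ altogether: the key input is a refinement of Chen--Zhang (proved via a weak-positivity result of Druel) asserting that for \emph{any} algebraic foliation $\cF$ on a $\bQ$-factorial log canonical $X$ with $-K_X$ nef, the class $K_\cF-K_X$ is pseudoeffective. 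Since $\alpha$ is movable, this immediately gives $\alpha\cdot(K_\cF-K_X)\ge 0$, contradicting $\mu_\alpha(T_X/\cF)<0$. That pseudoeffectivity statement is the missing idea in your outline; the structure theorems you cite are neither needed nor sufficient here.
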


Theorem \ref{thm-main} was conjectured  by Peternell in \cite[Conjecture 1.5]{Pet12}. Furthermore, if $C$ is  the intersection of $n-1$ sufficiently ample divisors of the same class, then Cao  (see \cite[Theorem 1.2]{Cao13}) and Guenancia (see \cite[Theorem C]{Guenan16}) proved the  nefness of $T_X|_C$  independently by analytic methods.

We   note that  Theorem \ref{thm-main} does not hold if we only assume that  $-K_X$ is pseudoeffective. The following example could be found in  \cite[Example 4.14]{DPS01}.  Let $Y$ be a curve of genus   $g\geqslant 2$ and let $L$ be a line bundle on $Y$ of degree smaller than $2-3g$. Let $X=\mathbb{P}(\sO_Y\oplus L)$ be a surface and let $f \colon X\to Y$ be the natural projection. Then, on the one hand, $-K_X$ is effective. On the other hand, we have a natural surjective morphism $T_X\to p^*T_Y$. Since  $\Omega^1_Y$ is ample, we deduce that  $T_X|_C$ is not nef if $C\subseteq X$ is a general very ample divisor.

We also remark that  on cannot replace a Mehta-Ramanathan-general curve by a movable curve in Theorem \ref{thm-main}.   
Actually, it was shown in \cite[Theorem 7.7]{BDPP13} that  if $X$ is  a   K3-surface or a   Calabi-Yau threefold, then there is a dominant family of    curves $(C_t)_{t\in T}$ such that $T_X|_{C_t}$ is not nef for general $t\in T$.

For movable curves, we prove the following theorem, which  implies Theorem \ref{thm-main} by using Mehta-Ramanathan theorem (see \cite[Theorem 6.1]{MR82}).

\begin{thm}
\label{thm-main-mov}
Let $X$ be a complex projective variety  with $\mathbb{Q}$-factorial log canonical singularities. Assume  that $-K_X$ is nef. Let $\alpha$ be a movable  class of curves. Then for any non-zero torsion-free quotient sheaf $Q$ of $T_X$, we have $\alpha\cdot c_1(Q)\geqslant 0$, where $c_1$ stands for the first Chern class.
\end{thm}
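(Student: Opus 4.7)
The plan is a proof by contradiction, combining a slope analysis of $T_X$, the Campana--P\u aun theory of foliations in singular settings, and the structure theorem of Cao and Cao--H\"oring for projective varieties with nef anticanonical class.

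Suppose, for contradiction, that some torsion-free quotient $T_X \twoheadrightarrow Q$ satisfies $\alpha \cdot c_1(Q) < 0$. After replacing $Q$ by the quotient of minimal $\alpha$-slope, we may assume $F := \ker(T_X \to Q)$ is saturated and $\alpha$-semistable, so that $F$ is the maximally $\alpha$-destabilizing subsheaf of $T_X$. Since $\alpha$ is movable and $-K_X$ is nef,
$$\alpha \cdot c_1(T_X) = -\alpha \cdot K_X \ge 0,$$
and the assumption $\alpha \cdot c_1(Q) < 0$ forces $\alpha \cdot c_1(F) > 0$, hence $\mu_\alpha^{\min}(F) = \mu_\alpha(F) > \mu_\alpha(T_X) \ge 0$.

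Two foliation-theoretic inputs enter next. First, by a theorem of Campana--P\u aun, whose singular version has been developed by Druel and Greb--Kebekus--Peternell, the maximal $\alpha$-destabilizing subsheaf $F$ of $T_X$ is automatically closed under the Lie bracket, hence is a foliation on $X$. Second, a singular form of the Bogomolov--McQuillan theorem (also due to Campana--P\u aun) asserts that any foliation of strictly positive minimal slope with respect to a movable class is algebraically integrable and has rationally connected general leaves. Together, these yield a dominant rational map $\phi : X \dashrightarrow Y$ whose general fibers are rationally connected and tangent to $F$.

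The last step extracts a contradiction from the structure of $X$. Any rationally connected subvariety through a general point of $X$ lies in the corresponding fiber of the MRC fibration $\pi : X \dashrightarrow Z$; hence the leaves of $F$ lie in the MRC fibers, and $F \subseteq T_{X/Z}$ as saturated subsheaves of $T_X$. This yields a short exact sequence
$$0 \to T_{X/Z}/F \to Q \to \pi^* T_Z \to 0.$$
By the structure theorem of Cao and Cao--H\"oring, applied to a suitable model of $X$, one has $K_Z \equiv 0$ and $\pi$ becomes a locally constant fibration; thus $\alpha \cdot \pi^* c_1(T_Z) = 0$, and the negativity of $\alpha \cdot c_1(Q)$ must come entirely from $\alpha \cdot c_1(T_{X/Z}/F)$. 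Using the local constancy of $\pi$ together with Mehta--Ramanathan, this reduces to the same statement for a general fiber of $\pi$, which is a lower-dimensional projective variety with nef anticanonical class; an induction on $\dim X$ then gives the desired contradiction.

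The main obstacle will be to make all three cited inputs (Campana--P\u aun, the singular Bogomolov--McQuillan, and Cao--H\"oring) function in the $\mathbb{Q}$-factorial log canonical setting with an arbitrary movable class $\alpha$, rather than in the smooth case with a complete intersection polarization, and to carefully control reflexive pullbacks, saturations, and Chern classes under the passage from $X$ to a resolution and then to a general MRC fiber.
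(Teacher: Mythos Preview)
Your setup via the Harder--Narasimhan filtration and the singular Campana--P\u aun theorem is essentially the paper's, but two points need correction.

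First, a minor confusion in the choice of $F$: taking $Q$ to be the minimal-slope quotient makes $F = E_{r-1}$, which is neither $\alpha$-semistable (when $r>2$) nor the maximal destabilizing subsheaf $E_1$. What you actually need is an intermediate $E_k$ with both $\mu_{\alpha,\min}(E_k)>0$ and $\mu_\alpha(T_X/E_k)<0$; such a $k$ exists because $\mu_\alpha(T_X)\ge 0$ while $\mu_{\alpha,\min}(T_X)<0$ (this is Lemma~\ref{lem-invol-subsheaf}). For this $E_k$ one verifies $2\mu_{\alpha,\min}(E_k)>\mu_{\alpha,\max}(T_X/E_k)$, so Proposition~\ref{prop-CP-alg-foliation-sing} applies and $\cF:=E_k$ is an algebraic foliation with $\alpha\cdot c_1(T_X/\cF)<0$.

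The serious gap is your final step. You yourself flag the obstacle, and it is genuine: the Cao and Cao--H\"oring structure theorems are proved for \emph{smooth} $X$ and are not available in the $\mathbb{Q}$-factorial log canonical setting of the statement. Even in the smooth case your reduction ``to the same statement for a general fiber via local constancy and Mehta--Ramanathan'' is not justified: an arbitrary movable class $\alpha$ on $X$ does not restrict to a movable class on a fiber, and Mehta--Ramanathan concerns restriction to complete-intersection curves, not to fibers of a fibration. The short exact sequence you write also ignores the ramification of the rational map $\pi$: one only has a generic inclusion $T_X/T_{X/Z}\hookrightarrow \pi^*T_Z$, so $c_1(T_X/T_{X/Z})$ and $\pi^*c_1(T_Z)$ differ by an effective divisor.

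The paper replaces this entire step by a single positivity result. Once $\cF$ is induced by a dominant rational map $f:X\dashrightarrow Y$, Theorem~\ref{thm-gen-positive-base-anticanonical} (a refinement of Chen--Zhang, deduced from \cite[Proposition~4.1]{Dru15}) shows that $K_\cF-K_X$ is pseudoeffective. Pairing with the movable class $\alpha$ gives $\alpha\cdot K_\cF\ge\alpha\cdot K_X$, i.e.\ $\alpha\cdot c_1(T_X/\cF)\ge 0$, contradicting the choice of $\cF$. No MRC fibration, no structure theorem, no induction---and the argument works uniformly in the log canonical setting.
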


As a corollary, we can obtain the following result, which extends  \cite[Theorem 1.2]{Xie04} to varieties of any  dimension. 
It can be viewed as a dual version of Theorem \ref{thm-Miya2}. 

\begin{cor}
\label{cor-c2}
Let $X$ be a normal complex projective variety of dimension $n$ with nef anticanonical class $-K_X$. Assume that $X$ has $\mathbb{Q}$-factorial log canonical singularities and is smooth in codimension $2$.  Then for any nef divisors $H_1,...,H_{n-2}$, we have $$c_2(T_X)\cdot H_1\cdot \cdots \cdot H_{n-2}\geqslant 0.$$ 
\end{cor}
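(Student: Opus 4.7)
The plan is to combine the generic nefness of $T_X$ provided by Theorem \ref{thm-main} with a Bogomolov-type discriminant inequality to conclude. Set
\[
\Delta(\mathcal{E}) := 2\operatorname{rk}(\mathcal{E})\,c_2(\mathcal{E}) - (\operatorname{rk}(\mathcal{E}) - 1)\,c_1(\mathcal{E})^2 .
\]
Since $X$ is smooth in codimension $2$, the class $c_2(T_X)$ is a well-defined numerical cycle computed on the smooth locus, and by continuity of the intersection pairing in the nef cone I may assume that $H_1,\ldots,H_{n-2}$ are in fact ample.

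The main step is to establish $\Delta(T_X) \cdot H_1 \cdots H_{n-2} \ge 0$. By Theorem \ref{thm-main-mov}, every torsion-free quotient of $T_X$ has non-negative first Chern class against every movable curve class. Choose an ample polarization $H$ built from $H_1, \ldots, H_{n-2}$ together with an auxiliary ample class, and consider the Harder--Narasimhan filtration $0 = E_0 \subset E_1 \subset \cdots \subset E_k = T_X$ with semistable graded pieces $F_i = E_i/E_{i-1}$ of slopes $\mu_i = c_1(F_i)/\operatorname{rk}(F_i)$. Generic nefness forces $\mu_k \cdot H^{n-1} \ge 0$, hence all $\mu_i \cdot H^{n-1}$ are non-negative. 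The Bogomolov--Gieseker inequality (in its form valid for nef divisor classes, due to Langer) applied to each semistable $F_i$ yields $\Delta(F_i) \cdot H_1\cdots H_{n-2} \ge 0$, and the standard expansion
\[
\Delta(T_X) = \sum_i \frac{n}{\operatorname{rk}(F_i)}\, \Delta(F_i) \;-\; \sum_{i<j} \operatorname{rk}(F_i)\operatorname{rk}(F_j)\,(\mu_i - \mu_j)^2
\]
then reduces the claim to controlling the Harder--Narasimhan cross-terms $(\mu_i - \mu_j)^2 \cdot H_1\cdots H_{n-2}$. One handles these by restricting to a general complete-intersection smooth surface (which lies in the smooth locus of $X$ because $X$ is smooth in codimension $2$) and applying the classical Hodge index theorem there, following Miyaoka's orthogonalization technique.

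Once the discriminant inequality is established, the conclusion follows quickly. Nefness of $-K_X$ gives $(-K_X)^2 \cdot H_1\cdots H_{n-2} \ge 0$, since a product of nef classes is non-negative; equivalently, $K_X^2 \cdot H_1\cdots H_{n-2} \ge 0$. Combining,
\[
2n\, c_2(T_X) \cdot H_1 \cdots H_{n-2} = \Delta(T_X)\cdot H_1\cdots H_{n-2} + (n-1)\,K_X^2 \cdot H_1 \cdots H_{n-2} \ge 0 .
\]
The main difficulty throughout is the cross-term bookkeeping: because the Harder--Narasimhan ordering makes each $\mu_i - \mu_j$ a class with positive $H^{n-1}$-intersection, the Hodge--Khovanskii bound gives only $(\mu_i-\mu_j)^2 \cdot H^{n-2} \le (\mu_H(F_i)-\mu_H(F_j))^2/H^n$, which is a weak upper bound rather than the desired non-positivity. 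Making these weak bounds cancel with the BG inequalities for the $F_i$, via the surface-slice Hodge-index argument, is the delicate technical point and the place where smoothness in codimension $2$ is essential.
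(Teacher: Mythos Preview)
Your argument has a genuine gap exactly where you flag it. You aim to prove $\Delta(T_X)\cdot H_1\cdots H_{n-2}\ge 0$, but in the Harder--Narasimhan expansion
\[
\Delta(T_X)=\sum_i \frac{n}{r_i}\,\Delta(F_i)-\sum_{i<j} r_ir_j(\mu_i-\mu_j)^2
\]
the cross-terms $(\mu_i-\mu_j)^2\cdot H_1\cdots H_{n-2}$ can well be positive (Hodge index gives only an \emph{upper} bound, as you note, not non-positivity), and you yourself concede that the required cancellation against the Bogomolov--Gieseker terms is an unresolved ``delicate technical point''. That is not a proof. More fundamentally, the discriminant inequality $\Delta(E)\ge 0$ is a theorem about \emph{semistable} sheaves and can fail for unstable ones, so there is no a priori reason your route through $\Delta$ should close at all; you are trying to establish something strictly stronger than what is needed.

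The paper's proof is two lines: Theorem~\ref{thm-main-mov} shows $T_X$ is generically $(H_1,\ldots,H_{n-2})$-semipositive, and Miyaoka's inequality \cite[Theorem~6.1]{Miy87} then gives $c_2(T_X)\cdot H_1\cdots H_{n-2}\ge 0$ directly. If you unpack Miyaoka's argument (the paper does so in its equality analysis, see Proposition~\ref{prop-HN-E-ne-0-nd>1}), it does \emph{not} pass through $\Delta$: one takes the HN filtration with respect to $\alpha=c_1(T_X)\cdot H_1\cdots H_{n-2}$ rather than $H^{n-1}$, applies Bogomolov--Gieseker to each graded piece $G_i$ to obtain
\[
2c_2(E)\cdot S\ \ge\ c_1(E)^2\cdot S-\sum_i\frac{1}{r_i}c_1(G_i)^2\cdot S,
\]
and then uses Hodge index together with the inequalities $c_1(G_i)\cdot c_1(E)\cdot S\ge 0$ (which is precisely generic semipositivity with respect to $\alpha$) to show $\sum_i\frac{1}{r_i}c_1(G_i)^2\cdot S\le c_1(E)^2\cdot S$. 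This yields $c_2(E)\cdot S\ge 0$ directly, without ever asserting $\Delta\ge 0$. The choice of polarization for the HN filtration and the decision to bound $c_2$ rather than $\Delta$ are both essential, and both differ from what you wrote.
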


An orbifold version of Theorem \ref{thm-Miya} was established in  \cite[Theorem 2.1]{CP15a}: if $(X,\D)$ is a projective $\mathbb{Q}$-factorial log canonical pair with $K_X+D$ pseudoeffective, then the orbifold cotangent sheaf $\Omega^1(X,\D)$ is $\pi$-generically nef for any adapted Kawamata finite cover $\pi \colon Z\to X$. In our situation, we can also prove the following orbifold version of Theorem \ref{thm-main}. 

\begin{thm}
\label{thm-main-orbifold}
Let $(X,\D)$ be a complex projective  $\mathbb{Q}$-factorial log canonical pair such that $-(K_X+\D)$ is nef. Then the orbifold tangent sheaf $T(X,\D)$ is $\pi$-generically nef for any adapted Kawamata finite cover $\pi \colon Z\to X$.
\end{thm}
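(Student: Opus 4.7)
The strategy is to transpose the argument sketched for Theorem~\ref{thm-main-mov} (and then Theorem~\ref{thm-main}) into the orbifold setting, working on the Kawamata cover $Z$. First, I would prove the following orbifold movable-class statement: for every movable curve class $\alpha$ on $Z$ and every non-zero torsion-free quotient $Q$ of the orbifold tangent sheaf $T(X,\D)$ (as a sheaf on $Z$), one has $\alpha\cdot c_1(Q)\geqslant 0$. The theorem as stated is then a consequence of this and \cite[Theorem 6.1]{MR82} applied on $Z$, exactly as Theorem~\ref{thm-main} follows from Theorem~\ref{thm-main-mov}.

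For the movable-class statement I would argue by contradiction, paralleling the sketch just before Theorem~\ref{thm-main-orbifold}. If some torsion-free quotient $T(X,\D)\twoheadrightarrow Q$ has $\alpha\cdot c_1(Q)<0$, then a suitable piece $\cF$ of the Harder-Narasimhan filtration of $T(X,\D)$ with respect to $\alpha$ provides an $\alpha$-destabilizing saturated subsheaf such that $\alpha\cdot c_1(T(X,\D)/\cF)<0$. A Bogomolov-McQuillan style argument (compatible with the orbifold structure) then shows $\cF$ is an orbifold foliation, giving
\[
\alpha\cdot K_\cF \;<\; \alpha\cdot K_{(X,\D)},
\]
where $K_{(X,\D)}$ denotes the orbifold canonical class pulled back to $Z$.

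Next, I would apply the orbifold version of Campana-P\u aun's algebraicity criterion \cite[Theorem 1.4]{CP15}: the positivity of $\cF$'s torsion-free quotients (built into its choice in the Harder-Narasimhan filtration) forces $\cF$ to have algebraic and rationally connected general leaves, so $\cF$ is induced by some rational dominant map $f:X\dashrightarrow Y$ with $0<\dim Y<\dim X$. An orbifold refinement of Theorem~\ref{thm-gen-positive-base-anticanonical} applied to the pair $(X,\D)$ and the fibration $f$ should then produce $\alpha\cdot K_\cF\geqslant \alpha\cdot K_{(X,\D)}$, contradicting the previous display.

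The principal obstacle is the correct orbifold formulation of Theorem~\ref{thm-gen-positive-base-anticanonical}: the proof there separates $\D$ into vertical and horizontal parts over $Y$, and in the orbifold setting one must both track this decomposition through the adapted cover $\pi$ and identify the correct orbifold foliation canonical class on $Z$ so that the resulting pseudoeffectivity statement truly bounds $\alpha\cdot K_{(X,\D)}$ (and not just $\alpha\cdot(K_X+\D_{\textup{ver}})$). Once this refinement is in place, together with the already-available orbifold analogue of Theorem~\ref{thm-CP-foli} and the Harder-Narasimhan--Bogomolov machinery, the proof becomes a routine transposition of the sketch of Theorem~\ref{thm-main-mov}.
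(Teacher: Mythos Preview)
Your overall architecture matches the paper's: Harder--Narasimhan filtration, Campana--P\u aun algebraicity, then a pseudoeffectivity statement to derive a contradiction. But you have misidentified the principal obstacle, and this is where your proposal has a real gap.

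You anticipate needing an ``orbifold refinement of Theorem~\ref{thm-gen-positive-base-anticanonical}'' that would bound $\alpha\cdot K_\cF$ against $\alpha\cdot K_{(X,\D)}$. The paper does not do this. It applies Theorem~\ref{thm-gen-positive-base-anticanonical} \emph{exactly as stated}, on $X$, obtaining that $K_\cF-K_X-\D_{ver}$ is pseudoeffective. The key ingredient you are missing is a determinant computation: if $F\subseteq\pi^*T(X,\D)$ is the saturation of $\pi^*\cF\cap\pi^*T(X,\D)$, then by \cite[Prop.~2.17]{Clau15} one has $\det F\cong\sO_Z(\pi^*(-K_\cF-\D_{hor}))$. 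Since $\det\pi^*T(X,\D)\cong\sO_Z(\pi^*(-K_X-\D))$, it follows that
\[
\det\bigl(\pi^*T(X,\D)/F\bigr)\cong\sO_Z\bigl(\pi^*(K_\cF-K_X-\D_{ver})\bigr),
\]
which is precisely the class the unmodified Theorem~\ref{thm-gen-positive-base-anticanonical} controls. So the horizontal/vertical bookkeeping you worry about is resolved not by strengthening the pseudoeffectivity theorem, but by this determinant identity for the orbifold subfoliation. Without it, your proposed inequality $\alpha\cdot K_\cF\geqslant\alpha\cdot K_{(X,\D)}$ is not what the argument actually yields, and the contradiction does not close.

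Two smaller points. First, the paper works only with classes $\alpha=\pi^*H_1\cdots\pi^*H_{n-1}$, not arbitrary movable classes on $Z$; this matters because the Galois invariance of $F$ (needed to descend to a foliation $\cF$ on $X$ via Theorem~\ref{thm-CP-alg-foliation-sing-orbifold}) comes from uniqueness of the Harder--Narasimhan filtration only when $\alpha$ itself is $G$-invariant. Your more ambitious movable-class statement on $Z$ would require a separate argument here. Second, the paper packages the algebraicity step as a singular-orbifold statement (Theorem~\ref{thm-CP-alg-foliation-sing-orbifold}) whose conclusion explicitly includes that $F$ is the saturation of $\pi^*\cF\cap\pi^*T(X,\D)$ in $\pi^*T(X,\D)$; this is exactly what feeds into the determinant computation above.
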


It is natural to ask under which conditions the positivities in the theorems above are strict. In the second part of the paper, we   prove the the following two theorems.

\begin{thm}
\label{thm-gen-ample}
Let $X$ be a smooth  projective complex manifold of dimension $n\geqslant 2$ with nef anticanonical class $-K_X$. Then the following properties are equivalent:
\begin{enumerate}
\item $X$ is rationally connected;
\item $T_X$ is generically ample, that is, $T_X|_C$  is an ample vector bundle for any Mehta-Ramanathan-general curve  $C$.  
\end{enumerate}
\end{thm}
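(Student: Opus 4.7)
The plan is to prove the two implications separately. For $(2) \Rightarrow (1)$ I will use the maximal rationally connected (MRC) fibration together with the BDPP characterization of non-uniruledness. For $(1) \Rightarrow (2)$ I will combine the Harder--Narasimhan filtration of $T_X$ with Theorems~\ref{thm-main}, \ref{thm-CP-foli} and~\ref{thm-gen-positive-base-anticanonical}.

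For $(2) \Rightarrow (1)$: suppose $T_X$ is generically ample but $X$ is not rationally connected, and let $f\colon X \dashrightarrow R$ be the MRC fibration. Then $\dim R > 0$ and $R$ is non-uniruled, so $K_R$ is pseudoeffective by BDPP. Let $\cF \subset T_X$ be the foliation induced by $f$. On a smooth model on which $f$ becomes a morphism $\tilde f\colon \tilde X \to \tilde R$, we have $T_{\tilde X}/T_{\tilde X/\tilde R} \cong \tilde f^\ast T_{\tilde R}$, so $c_1(T_X/\cF) \equiv -\tilde f^\ast K_{\tilde R}$, suitably descended to $X$. Pullback preserves pseudoeffectivity, hence this class pairs non-positively with $A^{n-1}$; but generic ampleness forces $c_1(Q) \cdot A^{n-1} > 0$ for every nonzero torsion-free quotient $Q$ of $T_X$, a contradiction.

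For $(1) \Rightarrow (2)$: assume $X$ rationally connected and, for contradiction, that $T_X$ fails to be generically ample. Theorem~\ref{thm-main} gives $\mu_{\min}(T_X) \geq 0$ with respect to $A$, so the failure of ampleness yields $\mu_{\min}(T_X) = 0$. Let $0 = \cF_0 \subsetneq \cdots \subsetneq \cF_k = T_X$ be the HN filtration with slopes $\mu_1 > \cdots > \mu_k = 0$. If $k = 1$, then $T_X$ is semistable of slope zero, so $-K_X \cdot A^{n-1} = 0$; together with pseudoeffectivity of $-K_X$ and the fact that $A^{n-1}$ is interior to the movable cone of curves, BDPP duality forces $K_X \equiv 0$. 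This contradicts rational connectedness of $X$ via the Beauville--Bogomolov decomposition, since rationally connected manifolds are simply connected and carry no nonzero holomorphic forms. If $k \geq 2$, set $\cF := \cF_{k-1}$; the slope bound $\mu_{\min}(\wedge^2 \cF) \geq 2\mu_{k-1} > 0 = \mu_{\max}(T_X/\cF)$ kills the O'Neill obstruction $\wedge^2 \cF \to T_X/\cF$, so $\cF$ is a foliation, and every nonzero torsion-free quotient of $\cF$ has slope $\geq \mu_{k-1} > 0$. Theorem~\ref{thm-CP-foli} then produces a rational dominant map $f\colon X \dashrightarrow Y$ inducing $\cF$, with $0 < \dim Y < \dim X$ and general fibers rationally connected. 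Theorem~\ref{thm-gen-positive-base-anticanonical} with $D = 0$ gives pseudoeffectivity of $K_\cF - K_X = c_1(T_X/\cF)$; but $c_1(T_X/\cF) \cdot A^{n-1} = \mathrm{rk}(T_X/\cF) \cdot \mu_k = 0$, so pairing a pseudoeffective class to zero against the interior movable class $A^{n-1}$ forces $c_1(T_X/\cF) \equiv 0$. On a smooth model of the fibration this reads $K_Y \equiv 0$; since $Y$ is a positive-dimensional dominant image of the rationally connected $X$, $Y$ is itself rationally connected, and the Beauville--Bogomolov contradiction reappears.

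The crux of the argument is Case $k \geq 2$ of $(1) \Rightarrow (2)$. The key design choices will be, first, to take the \emph{penultimate} HN term $\cF_{k-1}$ (rather than $\cF_1$) so that Theorem~\ref{thm-CP-foli}'s strict slope-positivity hypothesis is matched, and second, to use Theorem~\ref{thm-gen-positive-base-anticanonical} to collapse the resulting pseudoeffective class to numerical triviality against $A^{n-1}$. The main technical issue to watch will be the birational bookkeeping needed to identify $c_1(T_X/\cF)$ on $X$ with the pullback of $-K_Y$ on a smooth model of the fibration, together with the fact that a rationally connected smooth projective manifold cannot have numerically trivial canonical class.
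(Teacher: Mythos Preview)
Your argument for $(2)\Rightarrow(1)$ via the MRC fibration is valid and in fact somewhat cleaner than the paper's route (which goes through Zhang's structure theorem and a result of Campana--P\u aun on tensor powers of $\Omega_X^1$). One small correction: on the smooth model one has $c_1(T_{\tilde X}/\tilde\cF)=-\tilde f^*K_{\tilde R}-R$ with $R\geqslant 0$ the non-reduced-fiber contribution, not just $-\tilde f^*K_{\tilde R}$; but since $R$ is effective the sign of the pairing with $A^{n-1}$ is unaffected and your contradiction goes through.

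For $(1)\Rightarrow(2)$ there is a genuine gap, and also a large detour. The gap is the step ``on a smooth model this reads $K_Y\equiv 0$''. From $c_1(T_X/\cF)\equiv 0$ one only gets, on a smooth model where $f$ is a morphism, that $\tilde f^*K_{\tilde Y}+R$ is ($\pi$-exceptionally) numerically trivial, with $R\geqslant 0$ the ramification divisor; this does \emph{not} force $K_{\tilde Y}\equiv 0$ (e.g.\ a $\p^1$-fibration over $\p^1$ with enough multiple fibers). What you flagged as ``birational bookkeeping'' is in fact a wrong identification, not a missing detail.

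The detour is that Theorems~\ref{thm-CP-foli} and~\ref{thm-gen-positive-base-anticanonical} are not needed here at all. The paper argues directly: the last Harder--Narasimhan quotient $Q=T_X/\cF_{k-1}$ is a torsion-free quotient of the generically nef sheaf $T_X$ (Theorem~\ref{thm-main}), hence itself generically nef, so $c_1(Q)\cdot H_1\cdots H_{n-1}\geqslant 0$ for all ample $H_i$; together with $c_1(Q)\cdot A^{n-1}=0$ and Lemma~\ref{lem-vanishing} this gives $c_1(Q)\equiv 0$. Since $X$ is rationally connected it is simply connected, so $\det Q\cong\sO_X$, and the inclusion $Q^*\hookrightarrow\Omega_X^1$ produces a nonzero section of $\Omega_X^q$ ($q=\mathrm{rank}\,Q$), contradicting $h^0(X,\Omega_X^q)=0$. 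Note that this last step also repairs your argument: once you have $c_1(T_X/\cF)\equiv 0$, drop the attempt to read off $K_Y$ and instead conclude via simple connectedness and the vanishing of holomorphic forms on rationally connected manifolds. Your Case $k=1$ is then subsumed as well (take $Q=T_X$).
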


The next theorem was pointed out to the author by Junyan Cao,   and the idea of the proof goes back to Andreas  H\"oring.

\begin{thm}
\label{thm-classification}
Let $X$ be a smooth complex projective manifold of dimension $n$ with nef anticanonical class $-K_X$. Then the following properties are equivalent:
\begin{enumerate}
\item there are ample  divisors $H_1,...,H_{n-2}$ such that $$c_2(T_X)\cdot H_1\cdot \cdots \cdot H_{n-2} = 0;$$
\item there is a finite \'etale cover $\tilde{X}\to X$ such that  $\tilde{X}$  is either isomorphic  to an abelian variety or isomorphic to a $\p^1$-bundle over an abelian variety.
\end{enumerate}
\end{thm}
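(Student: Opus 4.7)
The direction (2) $\Rightarrow$ (1) is a direct Chern class computation. Chern numbers pull back multiplicatively under finite étale covers, so it suffices to compute on $\tilde X$. If $\tilde X$ is an abelian variety, then $T_{\tilde X}$ is trivial and $c_2 = 0$. If $\pi \colon \tilde X = \bP(V) \to A$ is a $\bP^1$-bundle over an abelian variety, the relative tangent sequence combined with $c(\pi^* T_A) = 1$ gives $c_2(T_{\tilde X}) = c_2(T_{\tilde X/A}) = 0$ since $T_{\tilde X/A}$ is a line bundle.

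For the harder direction (1) $\Rightarrow$ (2), I would first reduce to a single ample polarization $H = H_1 = \cdots = H_{n-2}$ by replacing the given $H_i$'s by a common positive multiple of an ample divisor. The proof of the non-negativity in Corollary \ref{cor-c2} expresses $c_2(T_X) \cdot H^{n-2}$ as a sum of non-negative terms coming from (i) the Bogomolov--Gieseker inequality applied to each $\mu_H$-semistable graded piece $Q_i = \cE_i/\cE_{i-1}$ of the Harder--Narasimhan filtration $0 = \cE_0 \subsetneq \cdots \subsetneq \cE_\ell = T_X$, whose slopes $\mu_1 > \cdots > \mu_\ell \geqslant 0$ are non-negative by Theorem \ref{thm-main-mov}, and (ii) Hodge--index-type inequalities controlling the mixed intersection numbers $c_1(Q_i) \cdot c_1(Q_j) \cdot H^{n-2}$. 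The equality $c_2(T_X) \cdot H^{n-2} = 0$ then forces every $Q_i$ to saturate Bogomolov--Gieseker (hence be projectively flat in the sense of \cite{DPS94}) and each $c_1(Q_i)$ to be numerically proportional to $H$.

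To convert this numerical rigidity into the claimed geometric structure, I would invoke the smoothness of the Albanese morphism for manifolds with nef anticanonical class due to Cao--H\"oring (see \cite{CH17}): the map $\alpha \colon X \to \operatorname{Alb}(X)$ is smooth and surjective, and since $\alpha^* T_{\operatorname{Alb}(X)}$ is trivial the positive-slope portion of the HN filtration of $T_X$ is contained in the relative tangent $T_{X/\operatorname{Alb}(X)}$. Restricting to a general fiber $F$, which is rationally connected and Fano, the projective flatness of the HN graded pieces combined with the vanishing $c_2(T_X) \cdot H^{n-2} = 0$ propagates to give $c_2(T_F) \cdot (H|_F)^{\dim F - 2} = 0$; together with Bogomolov on $F$ and the ampleness of $-K_F$, this forces $\dim F \leqslant 1$. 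If $\dim F = 0$, then $X$ is étale over $\operatorname{Alb}(X)$ and is itself an abelian variety; if $\dim F = 1$, the fibers are $\bP^1$, and after a finite étale cover of $\operatorname{Alb}(X)$ killing the Brauer obstruction of the resulting conic fibration we recover a genuine $\bP^1$-bundle over an abelian variety.

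The main obstacle is bringing all the pieces together cleanly: isolating the equality case of the combined Bogomolov--Gieseker and Hodge--index estimates in the HN filtration to simultaneously conclude projective flatness of each $Q_i$ and proportionality of each $c_1(Q_i)$ to $H$, then propagating the $c_2$-vanishing down to the general fiber in order to enable the Fano dichotomy argument. Careful bookkeeping of the cross-terms $c_1(Q_i) \cdot c_1(Q_j) \cdot H^{n-2}$ under the constraint that $\sum c_1(Q_i) = -K_X$ is nef is essential throughout.
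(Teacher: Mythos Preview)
Your proposal has two concrete errors and one genuine gap.

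First, the Hodge-index analysis does not give that each $c_1(Q_i)$ is numerically proportional to $H$; what it gives (when $\nu(-K_X)\leqslant 1$) is that each $c_1(Q_i)$ is numerically proportional to $c_1(T_X)=-K_X$. The paper splits according to the numerical dimension of $-K_X$: when $\nu(-K_X)\geqslant 2$ the Harder--Narasimhan filtration with respect to $-K_X\cdot H^{n-2}$ has length two with an invertible subsheaf and a quotient of trivial $c_1$; when $\nu(-K_X)=1$ one must work with the classes $(-K_X+\epsilon H)^{n-1}$ for small $\epsilon>0$ to extract the proportionality and the Bogomolov--Gieseker equality; when $\nu(-K_X)=0$ one uses the Beauville decomposition. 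Your single filtration with respect to $H^{n-1}$ does not separate these cases.

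Second, the general Albanese fiber $F$ is not Fano: it only inherits $-K_F$ nef from $X$, together with simple connectedness from Cao's theorem.

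The main gap, however, is the step ``$c_2(T_F)\cdot (H|_F)^{\dim F-2}=0$ together with Bogomolov on $F$ and ampleness of $-K_F$ forces $\dim F\leqslant 1$.'' Even granting $c_2(T_F)\cdot (H|_F)^{\dim F-2}=0$, this conclusion is precisely the content of the theorem applied to the simply connected manifold $F$; you have not set up an induction on dimension, and there is no off-the-shelf statement yielding it directly. The paper proceeds quite differently: it shows (Proposition~\ref{prop-c2=0-q<quotient}) that a torsion-free quotient $T_X\to Q$ with $c_1(Q)\equiv 0$ and $\operatorname{rank}Q=k$ forces $\tilde q(X)\geqslant k$, using Bando--Siu to produce flat quotients and hence holomorphic $1$-forms after passing to the universal cover. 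Combined with the structure of the filtration, this yields $\tilde q(X)\geqslant n-1$ directly when $\nu(-K_X)\geqslant 2$. In the $\nu(-K_X)=1$ case the paper needs an additional Mori-theoretic argument: the divisibility $c_1(E_k/E_{k-1})\equiv l_kD_k$ coming from projective flatness forces, in the non-degenerate subcase, an extremal ray of length $\geqslant 2$, and one invokes Wi\'sniewski's and Ando's theorems to produce a smooth conic bundle over a base with $K_Y\equiv 0$, then concludes by the $c_1\equiv 0$ case. None of this Mori-theoretic input is present in your outline, and it cannot be bypassed by the fiberwise argument you sketch.
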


This theorem answers a question of Yau in the   case of projective manifolds (see \cite[Problem 66]{Yau93}). We note that one could not expect the $\p^1$-bundle   to be trivial, see for example \cite[Example 3.5]{DPS94}.

There are two main ingredients for the proof of Theorem \ref{thm-main-mov}. The first one is the following theorem on algebraicity of foliations, due to  Campana and P\u aun  (See \cite[Theorem 1.1]{CP19}). 

\begin{thm}
\label{thm-CP-foli}
Let $X$ be a projective manifold. Let $\alpha$ be any movable curve class. Let $\cF\subset T_X$ be a foliation. Assume that the slope, with respect to $\alpha$, of any non-zero torsion-free quotient of $\cF$  is strictly positive. Then $\cF$ is  algebraically integrable. That is, $\cF$ is the foliation induced by some rational dominant map $f \colon X\dashrightarrow Y$.  Moreover, general leaves of $\cF$ are rationally connected.
\end{thm}

Another crucial theorem is the following one, which is a refined version of a theorem of Chen and  Zhang (see \cite[Main Theorem]{CZ13}).

\begin{thm}
\label{thm-gen-positive-base-anticanonical}
Let $(X,D)$ be a projective $\mathbb{Q}$-factorial log canonical pair with $-(K_X+D)$ nef. Let $f \colon X\dashrightarrow Y$ be a rational dominant map with $0<\dim Y< \dim X$. Let $\cF$ be the foliation induced by $f$. Then $ K_{\cF} - K_X-D_{ver} $ is pseudo-effective, where $D_{ver}$ is the vertical part of $D$ over $Y$, and $K_{\cF}$ is the canonical class of $\cF$.
\end{thm} 

We recall that, if $f \colon  X \dashrightarrow Y $ is a rational dominant map between two varieties and if $\D$ is a prime divisor in $X$, then $\D$ is said to be horizontal over $Y$ if its strict transform in the graph of $f$ dominates $Y$. Otherwise, $\D$ is said to be vertical over $Y$.

Let us sketch the proof of the Theorem \ref{thm-main-mov}. Our idea is inspired by Peternell's proof in the case of rational surfaces (see \cite[Theorem 5.9]{Pet12}). We  assume by contradiction that there is some movable class $\alpha$ and some  torsion-free quotient $T_X\to Q$ such that $\alpha\cdot c_1(Q) <0$. Then  we can find  a suitable subsheaf, namely $\cF$, in the Harder-Narasimhan semistable filtration of $T_X$ such that   $\alpha\cdot c_1(T/\cF)$ is negative, and that $\cF$ is a foliation on $X$.  In particular, $$\alpha\cdot K_{\cF}< \alpha\cdot K_X.$$ Moreover, by using Theorem \ref{thm-CP-foli},  we can show that $\cF$ is induced by a rational dominant map $f \colon X\dashrightarrow Y$. Then from Theorem \ref{thm-gen-positive-base-anticanonical}, we obtain that $$C\cdot K_{\cF}\geqslant C\cdot K_X.$$ This is a contradiction.

The present paper is organized as follows. After recalling some basic results in Section \ref{section:pre}, we will prove  Theorem \ref{thm-gen-positive-base-anticanonical},  Theorem \ref{thm-main-mov}, Theorem  \ref{thm-main} and  Corollary  \ref{cor-c2} successively in Section \ref{section:positive-tangnet}. 
The orbifold case will be treated in Section \ref{An orbifold version of generic nefness}. 
Finally, we will prove Theorem \ref{thm-gen-ample}  and Theorem \ref{thm-classification} from  Section \ref{section:rc} to Section \ref{section:Yau}.\\

\noindent \textbf{Acknowledgment.} The author would like to express his gratitude to St\'ephane Druel and Burt Totaro for reading the preliminary version of this paper  and warm encouragement.  He is   grateful to Junyan Cao for pointing out Theorem \ref{thm-classification} to him. He would also like to thank  Jun Li, Chen Jiang, Claire Voisin, Yuan Wang and Jian Xiao for general discussions.



\section{Preliminary}
\label{section:pre}
We first collect some notation and  elementary result in this section. Throughout this paper, we will work over $\mathbb{C}$, the field of complex numbers.

\subsection{Slope semistability and Harder-Narasimhan  filtrations}
\label{Slope semistability and Harder-Narasimhan  filtrations}
Let $X$ be a  normal projective variety and let $\alpha$ be a movable curve class.  Assume that either $X$ is $\mathbb{Q}$-factorial or $\alpha$ is the class of a complete intersection of basepoint-free Cartier divisors.  Then for any torsion-free coherent sheaf $E$ of  positive rank on $X$,  the slope of $E$ with respect to $\alpha$ is the number $$\mu_{\alpha}(E)=\frac{\alpha \cdot c_1(E) }{\mathrm{rank}\, E}.$$  
The maximal slope is defined as follows, $$\mu_{\alpha,max}(E)=\sup \{\mu_{\alpha}(F) \ |\  F\ \mbox{is a  non-zero saturated subsheaf of } E\}.$$ The supremum is in fact a maximum. 
The sheaf $E$ is called $\alpha$-semistable   (or just semistable if there is no ambiguity) if $\mu_{\alpha,max} (E)= \mu_{\alpha}(E)$. If $E$ is not semistable, then there is a unique maximal subsheaf $F$ of $E$ such that $\mu_{\alpha,max} (E)= \mu_{\alpha}(F)$. This $F$ is called the maximal destabilizing subsheaf and is automatically semistable.

There is a unique filtration, called the Harder-Narasimhan semistable filtration, of saturated subsheaves, $$0=E_0\subsetneq E_1 \subsetneq \cdots \subsetneq E_r=E$$ such that $E_i/E_{i-1}$ is the maximal destabilizing subsheaf of $E/E_{i-1}$  for all $i\in \{1,..., r\}$ and that the sequence $(\mu_{\alpha}(E_i/E_{i-1}))_{i\in \{1,..., r\}}$ is strictly decreasing.

The minimal slope 
is defined as follows,  $$\mu_{\alpha,min}(E)=\inf \{\mu_{\alpha}(Q) \ |\  Q\ \mbox{is  a  non-zero  torsion-free quotient sheaf  of  } E\}.$$  This infimum is also a minimum. Indeed, we have $\mu_{\alpha,min}(E)= \mu_{\alpha}(E/E_{r-1})$, where $E_{r-1}$ is the saturated subsheaf defined in the Harder-Narasimhan  filtration above.

\begin{lemma}
\label{lem-invol-subsheaf}
Let $X$ be a normal projective  variety and let $\alpha$ be a movable curve class in $X$. 
Assume that either $X$ is $\mathbb{Q}$-factorial or $\alpha$ is the  complete intersection class of basepoint-free Cartier divisors.
Let $E$ be a non-zero torsion-free  sheaf on $X$ such that  $\mu_{\alpha}(E)\geqslant 0$ and $\mu_{\alpha,min}(E) <0$.  Let $$0=E_0\subsetneq E_1 \subsetneq \cdots \subsetneq E_r=E$$ 
be the Harder-Narasimhan semistable filtration  with respect to $\alpha$.   Then there is some  $k\in \{1,...,r-1\}$ such that  $\mu_{\alpha}(E/E_k) <0$ and  $\mu_{\alpha,min}(E_k) = \mu_{\alpha}(E_k/E_{k-1})>0$.
\end{lemma}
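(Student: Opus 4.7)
Write $\mu_i := \mu_\alpha(E_i/E_{i-1})$ for $i=1,\dots,r$. By the defining property of the Harder--Narasimhan filtration the sequence $\mu_1 > \mu_2 > \cdots > \mu_r$ is strictly decreasing, and by the Campana--P\u aun identification of $\mu_{\alpha,min}$ with the slope of the last graded piece (cited just before the lemma via \cite[Proposition 1.3]{CP11}), we have $\mu_r = \mu_{\alpha,min}(E) < 0$. The same identification applied to the truncated sheaf $E_k$, whose HN filtration is simply $0 = E_0 \subsetneq \cdots \subsetneq E_k$, gives $\mu_{\alpha,min}(E_k)=\mu_k$; so condition (2) of the lemma reduces to the single inequality $\mu_k > 0$.

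I would then choose $k$ to be the \emph{smallest} index in $\{1,\dots,r\}$ with $\mu_\alpha(E/E_k) < 0$. This choice is legitimate because, on one hand, $\mu_\alpha(E/E_{r-1}) = \mu_r < 0$ shows such a $k$ exists and $k \le r-1$, while on the other hand $\mu_\alpha(E/E_0) = \mu_\alpha(E) \geqslant 0$ forces $k \geqslant 1$. Thus $k \in \{1,\dots,r-1\}$ and condition (1) of the lemma holds by construction.

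It remains to verify $\mu_k > 0$. Set $n := \mathrm{rank}(E/E_{k-1})$ and $a := \mathrm{rank}(E_k/E_{k-1})$, so $\mathrm{rank}(E/E_k) = n-a$. The additivity of degrees in the short exact sequence
$$
0 \lra E_k/E_{k-1} \lra E/E_{k-1} \lra E/E_k \lra 0
$$
(which makes sense because all three sheaves are torsion-free and $X$ is $\mathbb{Q}$-factorial, so intersection of $c_1$ with $\alpha$ is well-defined) yields
$$
n\,\mu_\alpha(E/E_{k-1}) \;=\; a\,\mu_k \;+\; (n-a)\,\mu_\alpha(E/E_k).
$$
By the minimality of $k$ we have $\mu_\alpha(E/E_{k-1}) \geqslant 0$; together with $\mu_\alpha(E/E_k) < 0$ this forces $a\,\mu_k > -(n-a)\,\mu_\alpha(E/E_k) > 0$, hence $\mu_k > 0$, as needed.

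The argument is essentially just bookkeeping with slopes and the strict-decrease property of the HN slopes; the only mild subtlety, which I do not expect to be an obstacle, is being careful that the choice of $k$ simultaneously guarantees $\mu_k > 0$ (not merely $\mu_{k+1} \le 0$), which is exactly what the minimality argument above achieves.
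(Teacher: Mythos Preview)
Your argument is correct and is essentially the same as the paper's: both choose $k$ minimal with $\mu_{\alpha}(E/E_k)<0$, use $\mu_{\alpha}(E/E_{r-1})=\mu_{\alpha,min}(E)<0$ and $\mu_{\alpha}(E)\geqslant 0$ to place $k\in\{1,\dots,r-1\}$, and then read off $\mu_{\alpha}(E_k/E_{k-1})>0$ from degree additivity in the exact sequence $0\to E_k/E_{k-1}\to E/E_{k-1}\to E/E_k\to 0$. You are simply more explicit about why $\mu_{\alpha,min}(E_k)=\mu_k$ and about the rank-weighted degree identity, but the strategy is identical.
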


\begin{proof}
We have $\mu_{\alpha}(E/E_{r-1})=\mu_{\alpha,min}(E)<0$. Let $k$ be the smallest integer in $\{0,...,r-1\}$ such that $\mu_{\alpha}(E/E_{k})<0$. Since $\mu_{\alpha}(E)\geqslant 0$, we know that $k\geqslant 1$. We consider the following exact sequence $$0\to E_{k}/E_{k-1}\to E/E_{k-1} \to E/E_{k} \to 0.$$ By the definition  of $k$, we have $\mu_{\alpha}(E/E_{k-1})\geqslant 0$. Thus $\mu_{\alpha}(E_k/E_{k-1})>0$.  
\end{proof}

\subsection{Foliations and relative tangent sheaves}

Let $X$ be a normal variety of dimension at least $2$ and let $T_X=(\Omega_X^1)^*$ be the reflexive tangent sheaf. A foliation $\cF$ on $X$ is a saturated subsheaf of $T_X$ which is closed under Lie brackets.  The canonical class $K_{\cF}$ of $\cF$ is a Weil divisor such that $$\sO_{X}(- K_{\cF}) \cong  \mathrm{det}\, \cF,$$  where $\mathrm{det}\, \cF$ is the reflexive hull of the top wedge product of  $\cF$. We say that $\cF$ is  algebraically integrable  if the dimension of the Zariski closure of a general leaf of $\cF$ is equal to the rank of $\cF$.

Typical examples of foliations are relative tangent sheaves. We consider a rational dominant map $f \colon X\dashrightarrow Y$ between normal varieties.  
Let $V$ be the smooth locus of $Y$. Let $U$ be a non-empty smooth open subset of $X$ such that    $f|_U$ is regular and $f(U)\subset V$.  
The relative tangent sheaf $T_{U/V}$ of $f|_U \colon U\to V$ is defined as the kernel of the natural differential map $$\mathrm{d}f|_U \colon T_U\to f^*T_V.$$ 
There is a unique saturated subsheaf $T_{X/Y}$ of the reflexive tangent sheaf $T_X$ such that $T_{X/Y}|_U=T_{U/V}$. We call $T_{X/Y}$ the relative tangent sheaf of $f \colon X\dashrightarrow Y$. It is a foliation on $X$.  We note that a foliation  is  algebraically integrable  if and only if it is induced by some rational dominant map   (see \textit{e.g.} \cite[Lemma 3.2]{AD13}).

The following proposition extends \cite[Theorem 1.1]{CP19} to singular varieties.

\begin{prop}  
\label{prop-CP-alg-foliation-sing}
Let $X$ be a projective normal $\mathbb{Q}$-factorial variety.  Let  $\alpha$ be a movable curve class. Assume that  $\cF$ is a saturated subsheaf of $T_X$ such that 
\begin{enumerate}
\item $\mu_{\alpha,min}(\cF) > 0$,
\item $2\mu_{\alpha,min}(\cF) >  \mu_{\alpha,max} (T_X/\cF).$
\end{enumerate}
Then $\cF$ is an algebraically integrable  foliation.  Moreover, general leaves of $\cF$ are rationally connected.
\end{prop}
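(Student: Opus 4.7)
The proof has two stages: first, showing $\cF$ is involutive using hypothesis~(2), and second, reducing to a resolution of singularities to apply Theorem~\ref{thm-CP-foli}. For involutivity, I would appeal to the O'Neill tensor: the Lie bracket on $T_X$, composed with the projection $T_X \to T_X/\cF$, defines an $\cO_X$-linear morphism $\mathcal{N}\colon \bigwedge^2 \cF \to T_X/\cF$, since the non-$\cO_X$-linear term $v(f)w$ appearing in $[v,fw]$ lies in $\cF$ and vanishes modulo $\cF$. If $\mathcal{N}$ were nonzero, the standard inequality $\mu_{\alpha,\min}(\mathrm{source}) \le \mu_{\alpha,\max}(\mathrm{target})$ for a nonzero morphism of torsion-free sheaves would force $\mu_{\alpha,\min}(\bigwedge^2 \cF) \le \mu_{\alpha,\max}(T_X/\cF)$. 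Because $\bigwedge^2 \cF$ is a quotient of $\cF \otimes \cF$ and tensor products of semistable sheaves remain semistable in characteristic zero, one has $\mu_{\alpha,\min}(\bigwedge^2 \cF) \ge \mu_{\alpha,\min}(\cF \otimes \cF) = 2\mu_{\alpha,\min}(\cF)$, contradicting hypothesis~(2). Hence $\mathcal{N}=0$ and $\cF$ is a foliation.

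For the algebraicity, I would pass to a resolution of singularities $\pi\colon \tilde X \to X$ with $\tilde X$ smooth projective, and define $\tilde{\cF} \subset T_{\tilde X}$ as the saturated subsheaf extending the pullback of $\cF$ from the open locus where $\pi$ is an isomorphism. Equivalently, dualizing $\pi^*\Omega_X^1 \to \Omega_{\tilde X}^1$ yields a map $T_{\tilde X} \to \pi^{[*]} T_X$, and $\tilde{\cF}$ is the saturation in $T_{\tilde X}$ of the preimage of $\pi^{[*]}\cF$. Since the quotient $T_{\tilde X}/\tilde{\cF}$ is torsion-free, the involutivity of $\cF$ on a dense open set propagates to all of $\tilde X$, so $\tilde{\cF}$ is a foliation. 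The critical step is then to construct a movable class $\tilde{\alpha}$ on $\tilde X$ for which $\mu_{\tilde{\alpha},\min}(\tilde{\cF}) > 0$. A natural candidate is $\tilde{\alpha} = (\pi^*H_1)\cdots(\pi^*H_{n-1}) + \epsilon \tilde H^{n-1}$, where $H_1,\ldots,H_{n-1}$ are very ample divisors on $X$ approximating $\alpha$ and $\tilde H$ is ample on $\tilde X$; the projection formula together with hypothesis~(1) should then yield the required positivity provided $\epsilon$ is taken sufficiently small.

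Once $\tilde{\cF}$ and $\tilde{\alpha}$ are in place, Theorem~\ref{thm-CP-foli} applies on $\tilde X$ and produces algebraic leaves whose general member is rationally connected; pushing forward by the birational map $\pi$ transports these properties to $\cF$ on $X$. The main obstacle I anticipate is the slope transfer under $\pi$: one must relate torsion-free quotients of $\tilde{\cF}$ on $\tilde X$ to those of $\cF$ on $X$ via reflexive pullback and saturation, and control the error contributions supported on the exceptional divisor, so that the strict positivity $\mu_{\alpha,\min}(\cF) > 0$ passes to $\mu_{\tilde{\alpha},\min}(\tilde{\cF}) > 0$. This is the technical core of extending Theorem~\ref{thm-CP-foli} from the smooth setting to the $\mathbb{Q}$-factorial one.
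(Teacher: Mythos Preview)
Your involutivity argument via the O'Neill tensor is correct and is essentially what underlies \cite[Theorem~1.4]{CP15}; the paper simply cites that reference, which proves involutivity and algebraicity simultaneously on the resolution under hypotheses (1) and (2). So this first stage is fine, just packaged differently.

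The real divergence is in how you transfer slopes to the resolution, and here your proposal has a genuine gap. You suggest taking $\tilde{\alpha} = (\pi^*H_1)\cdots(\pi^*H_{n-1}) + \epsilon \tilde H^{n-1}$ with the $H_i$ very ample on $X$ ``approximating $\alpha$''. But an arbitrary movable class $\alpha$ need not be a limit of complete intersections of ample divisors on $X$ itself; movable classes are by definition pushforwards of such intersections from \emph{birational models} of $X$, and the closure of complete-intersection classes on $X$ can be strictly smaller than the movable cone. So the approximation you propose may not exist, and the perturbation by $\epsilon\tilde H^{n-1}$ does not fix this. Moreover, even when such an approximation does exist, you correctly flag the difficulty of controlling exceptional contributions to the slopes, and you do not resolve it.

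The paper's approach dissolves this problem. One takes $\alpha' = r^*\alpha$ to be the \emph{numerical pullback} of the curve class (see \cite[Construction~A.15]{GKP14}), defined by $\alpha'\cdot\beta' = \alpha\cdot r_*\beta'$ for every divisor class $\beta'$ on $X'$. This class is automatically movable by \cite[Lemma~A.17]{GKP14}. The key observation is then purely formal: if $\sG'$ is torsion-free on $X'$ and $r_*\sG'$ agrees with a torsion-free $\sG$ on $X$ in codimension~$1$, then $c_1(\sG') \cdot \alpha' = c_1(\sG)\cdot\alpha$, so $\mu_{\alpha'}(\sG') = \mu_\alpha(\sG)$, and likewise for $\mu_{\max}$ and $\mu_{\min}$. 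Applying this to $\cF' \subset T_{X'}$ (the saturation of the pullback of $\cF$) and to $T_{X'}/\cF'$ transfers both hypotheses (1) and (2) \emph{exactly}, with no approximation and no exceptional error terms. This is the technical core you were looking for, and it is much cleaner than a perturbative argument.
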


\begin{proof}
Let $r \colon X'\to X$ be a resolution of singularities, and let $\alpha'=r^*\alpha$ be the numerical pull-back  such that $$\alpha' \cdot \beta' =\alpha \cdot r_*\beta'$$ for any divisor class $\beta'$ on $X'$ (see \cite[Construction A.15]{GKP14}).  Since $\alpha$ is movable, so is   $\alpha'$  by \cite[Lemma A.17]{GKP14}.  

If $\cF'$ is the saturated subsheaf of $T_{X'}$ induced by $\cF$, then   $$\mu_{\alpha',min}(\cF')  =\mu_{\alpha,min}(\cF) > 0,$$ and $$2\mu_{\alpha',min}(\cF')  = 2 \mu_{\alpha,min}(\cF) >  \mu_{\alpha,max} (T_{X}/\cF)=\mu_{\alpha',max} (T_{X'}/\cF').$$ Hence $\cF'$ is an  algebraically integrable  foliation  by \cite[Theorem 1.4]{CP19}. Moreover, general leaves of $\cF'$ are rationally connected.  The proposition then follows from the property that $\cF\cong (r_*\cF')^{**}$.
\end{proof}

\section{Positivity of tangent sheaves}
\label{section:positive-tangnet}

In this section, we will successively  prove Theorem \ref{thm-gen-positive-base-anticanonical},  Theorem \ref{thm-main-mov}, Theorem  \ref{thm-main} and  Corollary  \ref{cor-c2}.  
We will need the following statement, which is a special case of a result of Druel (see \cite[Proposition 4.1]{Druel2017}).

\begin{prop} 
\label{prop-pseuf}
Let $f \colon X\to Y$ be a surjective morphism between normal projective varieties. 
Assume that $X$ is $\mathbb{Q}$-factorial. 
Let $\D$ be an effective $\mathbb{Q}$-divisor in $X$. 
Assume that the pair $(F,\D|_F)$ is log canonical, where $F$ is a general fiber of $f$. 
If there is some positive integer $m$ such that  $m(K_X+\D)$ is Cartier and that $h^0(F, \sO_F(m(K_X+\D)|_F))>0$, then $K_{\cF}+\D$ is pseudo-effective, where $\cF$ is the foliation induced  by $f$.  
\end{prop}


\begin{proof}[{Proof of Theorem \ref{thm-gen-positive-base-anticanonical}}]

There is a log resolution $\pi \colon Z\to X$ of $(X,D)$ such that the induced map $g \colon Z\to Y$ is a morphism. By blowing up $Y$ and $Z$ if necessary, we may assume that $Y$ is smooth.
We write $$K_Z+D_Z\sim_{\mathbb{Q}} \pi^*(K_X+D)+E,$$ where $D_Z$ and $E$ are effective $\mathbb{Q}$-divisors without common components. Moreover $E$ has $\pi$-exceptional support. Since $(X,D)$ is log canonical, so is the pair $(Z,D_Z)$.

\centerline{
\xymatrix{
  Z \ar[d]^{g}   \ar[r]^{\pi} & X\\
 Y  &
}
}

Let $L$ be an ample divisor in $Z$ and let $\delta >0$ be a rational number. Then $-\pi^*(K_X+D)+\delta L$ is ample. 
We can then choose an effective $\mathbb{Q}$-divisor $$A\sim_{\mathbb{Q}}-\pi^*(K_X+D)+\delta L$$ such that $(Z,D_Z+A)$ is a log canonical. We have $$K_{Z}+ D_Z+A \sim_{\mathbb{Q}} E+\delta L.$$ Let $\D=D_Z+A-\pi_*^{-1}D_{ver}$. Then $\D$ is effective, and  we have \begin{equation}
K_{Z}+ \D \sim_{\mathbb{Q}} E+\delta L-\pi_*^{-1}D_{ver}. 
\end{equation}

Let $G$ be a general fiber of $g$. We claim that $m(K_{Z}+\D)|_G$ has non-zero global sections for large enough and sufficiently divisible integer $m$. Indeed, by the previous equation, we have  $$(K_{Z}+\D)|_G \sim_{\mathbb{Q}} (E+\delta L-\pi_*^{-1}D_{ver})|_G \sim_{\mathbb{Q}} (E+\delta L)|_G.$$ The right-hand-side above is a big divisor. 
Hence $m(K_{Z}+\D)|_G$ has non-zero global sections for large enough and sufficiently divisible integer $m$.

By Proposition \ref{prop-pseuf}, we obtain that $K_{\cG}+\D$ is pseudoeffective, where $\cG$ is the foliation induced by $g$.  Hence 
\begin{eqnarray*}
E+\delta L+(K_{\cG}-K_Z-\pi_*^{-1}D_{ver})  &\sim_{\mathbb{Q}}& E +\delta L - \pi_*^{-1}D_{ver} +(K_{\cG}-K_Z)\\
&\sim_{\mathbb{Q}}& K_{Z}+\D  + (K_{\cG}-K_Z)\\
& \sim_{\mathbb{Q}}& K_{\cG}+\D 
\end{eqnarray*}
is pseudoeffective.

Since this is true for arbitrary $\delta>0$, we obtain that $$E+(K_{\cG}-K_Z-\pi_*^{-1}D_{ver})$$ is pseudoeffective. 
Thus $$K_{\cF}-K_X- D_{ver}=\pi_*(E+(K_{\cG}-K_Z-\pi_*^{-1}D_{ver}))$$ is pseudoeffective.
\end{proof}


\begin{proof}[{Proof of Theorem \ref{thm-main-mov}}]
Assume by contradiction that  $\mu_{\alpha,min}(T_X) < 0$. In particular, $T_X$ is not $\alpha$-semistable. Let $$0=E_0\subsetneq E_1 \subsetneq \cdots \subsetneq E_r=T_X$$ with $r\geqslant 2$ be the Harder-Narasimhan semistable filtration.  
Then by Lemma \ref{lem-invol-subsheaf}, there is some $k\in \{1,...,r-1\}$ such that  $\mu_{\alpha}(T_X/E_k) <0$ and $$\mu_{\alpha,min}(E_k) = \mu_{\alpha}(E_k/E_{k-1})>0.$$  We  have   the following inequality, 
\begin{eqnarray*}
2\mu_{\alpha,min}(E_k) &=& 2\mu_{\alpha}(E_k/E_{k-1}) > \mu_{\alpha}(E_k/E_{k-1}) \\
  &>& \mu_{\alpha}(E_{k+1}/E_{k}) = \mu_{\alpha,max}(T_X /E_{k}).
\end{eqnarray*}  Hence $E_k$ is an algebraically integrable  foliation  by Proposition \ref{prop-CP-alg-foliation-sing}.  We denote $E_k$ by $\cF$. Then $$\alpha\cdot  K_{\cF} < \alpha\cdot K_X $$ for $\mu_{\alpha}(T_X/E_k) <0$.

Since $\cF$ is  algebraically integrable, there is a rational dominant map $f \colon X\dashrightarrow Y$   such that $\cF$ is induced by $f$. Moreover, since $\cF$ is a non-zero proper subsheaf, we have $0 < \dim Y<\dim X$. Hence Theorem \ref{thm-gen-positive-base-anticanonical} shows that $$\alpha\cdot  K_{\cF} \geqslant \alpha\cdot K_X.$$ This is a contradiction. 
\end{proof}


\begin{proof}[{Proof of Theorem \ref{thm-main}}]
Let $C$ be a  Mehta-Ramanathan-general curve.  Let  $$0=E_0\subsetneq E_1 \subsetneq \cdots \subsetneq E_r=T_X$$ be  the Harder-Narasimhan filtration with respect to the class  $\alpha$ of $C$. Then, by  Mehta-Ramanathan Theorem (see \cite[Theorem 6.1]{MR82}), the restriction $$0=E_0|_C\subsetneq E_1|_C \subsetneq \cdots \subsetneq E_r|_C=T_X|_C$$ is the Harder-Narasimhan filtration for $T_X|_C$. In particular, we have $$\mu_{min}(T_X|_C)=\mu_{\alpha, min}(T_X)\geqslant 0.$$  Thus $T_X|_C$ is nef.
\end{proof}


To prove Corollary \ref{cor-c2}, we first recall that a non-zero torsion-free sheaf $E$ on a projective manifold $X$ of dimension $n$ is said to be generically $(H_1,...,H_{n-2})$-semipositive for some ample divisors $H_1,...,H_{n-2}$ if for each nef divisor $D$, we have $\mu_{\alpha,min}(E)\geqslant 0$, where $\alpha$ is the class of $D\cdot H_1\cdot \cdots \cdot H_{n-2}$. 
In particular,  if $E$ is generically nef, then it is generically $(H_1,...,H_{n-2})$-semipositive for any ample divisors $H_1,...,H_{n-2}$.

\begin{proof}[{Proof of Corollary  \ref{cor-c2}}]  By Theorem \ref{thm-main-mov}, we see that  the tangent sheaf $T_X$ is generically $(H_1,...,H_{n-2})$-semipositive.  Since $-K_X$ is nef,  Miyaoka inequality  (see \cite[Theorem 6.1]{Miy87}) shows that  $$c_2(T_X)\cdot H_1\cdot \cdots \cdot H_{n-2}\geqslant 0.$$
\end{proof}

\section{An orbifold version of generic nefness}
\label{An orbifold version of generic nefness}

In this section, we will prove Theorem \ref{thm-main-orbifold}. We  refer to \cite[Section 1]{CP15a} for detailed notion  of orbifolds. Let  $(X,D)$ be a projective $\mathbb{Q}$-factorial log canonical  pair of dimension $n$, where $D$ is an effective $\mathbb{Q}$-divisor.  
Let $\pi \colon Z\to X$ be a  Kawamata finite cover adapted to $(X,D)$.  Let $H_1,...,H_{n-1}$ be ample divisors in $X$. Then the orbifold cotangent sheaf $\Omega^1(X,D)$ (respectively the orbifold tangent sheaf $T(X,D)$) is said to be $\pi$-generically semipositive with respect to $H_1,...,H_{n-1}$ if for any non-zero torsion-free quotient $Q$ of $\pi^*\Omega^1(X,D)$ (respectively of $\pi^*T(X,D)$), we have 
$$c_1(Q) \cdot \pi^* H_1\cdot \cdots \cdot \pi^* H_{n-1}\geqslant 0.$$  
We say  that $\Omega^1(X,D)$ (respectively $T(X,D)$) is $\pi$-generically nef if it is $\pi$-generically semipositive with respect to any ample divisors $H_1,...,H_{n-1}$ in $X$. In particular,   if $D$ is an integral divisor and if $\pi$ is the identity map, then  $\pi$-generic nefness is the same as generic nefness by Mehta-Ramanathan theorem.

In order to prove Theorem \ref{thm-main-orbifold}, we will need the following version of  \cite[Theorem 1.4]{CP19} for singular spaces.   

\begin{thm}  
\label{thm-CP-alg-foliation-sing-orbifold}
Let $(X,D)$ be a projective  $\mathbb{Q}$-factorial log canonical pair. Let $\pi \colon Z\to X$ be a finite cover adapted to $(X,D)$.  Let $H_1, ... , H_{n-1}$ be  very ample divisors  in $X$ and let $\alpha$ be the class of $\pi^*H_1\cdot \cdots \cdot  \pi^*H_{n-1}$. Assume that there is a saturated subsheaf $\cG$ of $\pi^*T(X,D)$ such that  
\begin{enumerate}
\item $\cG$ is $G$-invariant, where $G$ is the Galois group of $\pi$.
\item $\mu_{\alpha,min}(\cG) > 0$,
\item $2\mu_{\alpha,min}(\cG) >  \mu_{\alpha,max} (\pi^*T(X, D) /\cG).$
\end{enumerate}
Then the saturation of $\cG$ in $(\pi^*T_X)^{**}$ defines  an algebraically integrable foliation  $\cF$ on $X$.  Moreover, $\cG$ is the saturation of $(\pi^*\cF)^{**}\cap \pi^*T(X,D)$ in $\pi^*T(X,D)$.
\end{thm}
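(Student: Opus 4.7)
The plan is to mimic the strategy of Proposition \ref{prop-CP-alg-foliation-sing} and reduce to the smooth orbifold setting, where \cite[Theorem 1.4]{CP15} applies directly; the $G$-invariance hypothesis is exactly what is needed to descend the resulting foliation from $X'$ to $X$ and to obtain the saturation statement.

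First, I would construct a $G$-equivariant resolution. Choose a log resolution $\mu: X' \to X$ of $(X,D)$, form the normalized fiber product $Z \times_X X'$, and apply $G$-equivariant resolution of singularities to obtain a smooth variety $Z'$ admitting a lifted $G$-action, a $G$-equivariant birational morphism $r: Z' \to Z$ covering $\mu$, and a finite cover $\pi': Z' \to X'$ that is Kawamata-adapted to the orbifold transform $(X',D')$. Let $F'$ be the saturation in $(\pi')^*T(X',D')$ of the sheaf that agrees with $r^*F$ on the open locus where both $r$ and $\mu$ are isomorphisms. Since $r$ is $G$-equivariant, $F'$ inherits $G$-invariance; and, setting $\alpha' := r^*\alpha$, the slope-preservation argument used in Proposition \ref{prop-CP-alg-foliation-sing} shows that $\mu_{\alpha',\min}(F') > 0$ and $2\mu_{\alpha',\min}(F') > \mu_{\alpha',\max}((\pi')^*T(X',D')/F')$.

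Next, I would apply the smooth version \cite[Theorem 1.4]{CP15} to the pair $(X',D')$ with adapted cover $\pi'$ and $G$-invariant saturated subsheaf $F'$, obtaining an algebraic foliation $\cF'$ on $X'$ such that $F'$ is the saturation of $(\pi')^*\cF' \cap (\pi')^*T(X',D')$ inside $(\pi')^*T(X',D')$. Define $\cF := (\mu_*\cF')^{**}$, viewed as a saturated subsheaf of $T_X$. Since $\mu$ is birational, $\cF$ is again an algebraic foliation on $X$, and $r_*(\pi')^*\cF'$ agrees with $\pi^*\cF$ in codimension one. Comparing $F$ with the saturation of $\pi^*\cF \cap \pi^*T(X,D)$ on the open locus where $\mu$ and $r$ are both isomorphisms, and then extending across the codimension-two complement by reflexivity, yields the second assertion.

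The step I expect to be the main obstacle is the equivariant resolution together with the orbifold comparison. One must guarantee that $(X',D')$ admits an adapted Kawamata cover whose total space is exactly $Z'$ equipped with the lifted $G$-action, and that the slope identifications under $r$ extend from the usual tangent sheaf to the orbifold tangent sheaves, whose definitions involve the multiplicities along the components of $D$ and $D'$. A careful analysis of how $\pi^*T(X,D)$ compares with $r_*(\pi')^*T(X',D')$ along the strict transforms of $D$ and along the exceptional divisors of $r$ is needed both to transfer the slope hypotheses and to extract the final identification of $F$ with the saturation of $\pi^*\cF \cap \pi^*T(X,D)$.
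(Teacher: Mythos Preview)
Your overall strategy is exactly the paper's: pass to a log resolution $X'\to X$, build an adapted cover $Z'\to X'$ from $Z$, transport $F$ to a $G$-invariant saturated subsheaf $F'\subseteq \pi'^*T(X',D')$, check that the slope hypotheses persist, invoke \cite[Theorem 1.4]{CP15} on $(X',D')$, and push the resulting foliation down to $X$.

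The one place where you diverge from the paper, and where a genuine difficulty creeps in, is the construction of $Z'$. You propose to take the normalized fiber product and then apply $G$-equivariant resolution of singularities to obtain a \emph{smooth} $Z'$. But once you resolve, the map $Z'\to X'$ is only generically finite, so it cannot be a Kawamata cover adapted to $(X',D')$; this is precisely the obstacle you flag at the end, and it is real. The paper sidesteps it entirely: it takes $Z'$ to be the \emph{normalization} of $Z\times_X X'$ and nothing more. That $Z'$ is then automatically a finite cover of $X'$ adapted to $(X',D')$, and smoothness of $Z'$ is irrelevant, since \cite[Theorem~1.4]{CP15} only requires the base $(X',D')$ to be log smooth. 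With this choice the comparison of orbifold tangent sheaves is straightforward: one works over the open set $V'=\pi'^{-1}r^{-1}(U)$, where $U$ is the snc locus of $(X,D)$ and $r$ is an isomorphism, so $\pi'^*T(X',D')|_{V'}$ is literally identified with $\pi^*T(X,D)|_V$. The subsheaf $F'$ is defined as the unique $G$-invariant saturated extension of $F|_V$, the slope identities hold because everything agrees in codimension one, and the final saturation statement for $F$ follows by the same codimension-one comparison (the paper cites \cite[Corollary 5.10]{CP15} for the analogous statement upstairs).

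So: drop the equivariant resolution step, take $Z'$ to be the bare normalization of the fiber product, and your argument becomes the paper's.
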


\begin{proof}
Let $r \colon X'\to X$ be a log resolution of $(X,D)$ which is an isomorphism over the smooth locus $U$ of $(X,D)$.  Let $Z'$ be the normalization of $Z\times_X X'$. Then the natural morphism $\pi' \colon Z'\to X'$ is an adapted finite cover of $(X',D')$, where $D'=r_*^{-1}D$.  Let $U'=r^{-1}(U)$, $V=\pi^{-1}(U)$ and $V'=\pi'^{-1}(U')$.

\centerline{
\xymatrix{
  Z' \ar[d]_{\pi'}   \ar[r] & Z \ar[d]^{\pi}\\
 X'  \ar[r]^{r}  &  X
}
}

There is a unique $G$-invariant saturated subsheaf $\cG'$ of $\pi'^*T(X',D')$  such that $\cG'|_{V'}$ is isomorphic to $\cG|_V$.  
Let $$\alpha' = (\pi'\circ r)^*H_1\cdot \cdots \cdot (\pi'\circ r)^*H_{n-1}.$$ We have $\mu_{\alpha',min}(\cG')= \mu_{\alpha,min}(\cG) > 0$  and  $$\mu_{\alpha' ,max} (\pi'^*T(X', D')/\cG') =  \mu_{\alpha,max} (\pi^*T(X, D)/\cG).$$ 
Hence by \cite[Theorem 1.4]{CP19}, the saturation of $\cG'$ in $\pi'^*T_{X'}$ defines an  algebraically integrable  foliation  $\cF'$ on $X'$. 
Moreover,  $\cG'$ is the saturation of $\pi'^*\cF'\cap \pi'^*T(X',D')$ in $\pi'^*T(X',D')$ (see \cite[Corollary 5.9]{CP19}).  
Let $\cF$ be the saturation of the natural image of $r_*\cF'$ in $T_X$. 
Then $\cF$ is an  algebraically integrable  foliation and $\cG$ is the saturation of $(\pi^*\cF)^{**}\cap \pi^*T(X,D)$ in $\pi^*T(X,D)$.
\end{proof}

Now we will prove Theorem \ref{thm-main-orbifold}.

\begin{proof}[{Proof of Theorem \ref{thm-main-orbifold}}]
The proof is similar to the one of Theorem \ref{thm-main}. Assume the opposite.  
Then there are very ample divisors $H_1,...,H_{n-1}$ such that $T(X,D)$ is not $\pi$-generically semipositive with respect to $H_1,...,H_{n-1}$.  Let $\alpha$ be the class of $\pi^*H_1\cdot \cdots \cdot \pi^*H_{n-1}$.  

By applying Lemma \ref{lem-invol-subsheaf} to $\pi^*T(X,D)$, we can find a saturated subsheaf $\cG$ of $\pi^*T(X,D)$ such that  $\mu_{\alpha,min}(\cG)>0$ and that $ \mu_{\alpha}(\pi^*T(X,D)/\cG) <0$. 
From the uniqueness of Harder-Narasimhan filtration, we know that $\cG$, as a component in the  Harder-Narasimhan filtration, is invariant under the Galois group of $\pi$.  
Moreover, as in the proof of Theorem \ref{thm-main-mov}, we have $$2\mu_{\alpha,min}(\cG) > \mu_{\alpha, max}(\pi^*T(X,D)/\cG).$$   
By Theorem \ref{thm-CP-alg-foliation-sing-orbifold}, the saturation of $\cG$ in $(\pi^*T_X)^{**}$ defines an  algebraically integrable  foliation $\cF$ on $X$.  
Assume that $\cF$ is the relative tangent sheaf of some dominant rational map $f \colon X\dashrightarrow Y$.  

Then, on the one hand, by Theorem \ref{thm-gen-positive-base-anticanonical}, we have $$\alpha\cdot \pi^*(K_{\cF}-K_{X}-D_{ver})\geqslant 0,$$  where $D_{ver}$ is the  vertical part of $D$ over $Y$. 

On the other hand, by Theorem \ref{thm-CP-alg-foliation-sing-orbifold},  the sheaf $\cG$ is  the  saturation of the intersection $(\pi^*\cF)^{**} \cap \pi^*T(X,D)$ in $\pi^*T(X,D)$.   
By \cite[Prop. 2.17]{Clau17}, we have $$\mathrm{det}\,  \cG \cong \sO_Z(\pi^*(-K_{\cF}-D_{hor})),$$ where $D_{hor}$ is the horizontal part of $D$ over $Y$. Thus $$\mathrm{det}\,  (\pi^*T(X,D)/\cG) \cong \sO_Z(\pi^*(K_{\cF}-K_{X}-D_{ver})),$$  and we have $$  \alpha\cdot c_1(\pi^*T(X,D)/\cG) = \alpha\cdot \pi^*(K_{\cF}-K_{X}-D_{ver}) \geqslant 0.$$ 
Since $\mu_{\alpha}(\pi^*T(X,D)/\cG) <0$ by the construction of $\cG$, we obtain a contradiction.
\end{proof}

\section{Relation with rational connectedness}
\label{section:rc}
The aim of this section is to prove Theorem \ref{thm-gen-ample}. We will need the following lemma. 

\begin{lemma}
\label{lem-vanishing}
Let $X$ be a smooth projective variety and let $\delta$ be a cycle of pure dimension $k$. Assume that for any ample divisors $H_1,...,H_k$, we have $\delta\cdot H_1\cdot \cdots \cdot H_k\geqslant 0$. Then the following two conditions are equivalent:
\begin{enumerate}
\item there are ample divisors $H_1,...,H_k$ such that  $\delta\cdot H_1\cdot \cdots \cdot H_k= 0$;
\item for any ample divisors $H_1,...,H_k$, we have $\delta\cdot H_1\cdot \cdots \cdot H_k= 0$;
\item for any nef divisors $H_1,...,H_k$, we have $\delta\cdot H_1\cdot \cdots \cdot H_k= 0$.
\end{enumerate}
\end{lemma}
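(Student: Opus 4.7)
The implications $(3)\Rightarrow(2)\Rightarrow(1)$ are trivial, since ample divisors are nef and $(1)$ asserts only the existence of such a tuple. The content lies in $(1)\Rightarrow(3)$, and I would in fact prove the stronger statement that under hypothesis $(1)$ the multilinear intersection form
$$f\colon(\NS(X)_{\mathbb R})^k\lra\mathbb R,\qquad f(D_1,\ldots,D_k)=\delta\cdot D_1\cdots D_k,$$
vanishes identically on $(\NS(X)_{\mathbb R})^k$. This clearly implies both $(2)$ and $(3)$.

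Fix ample $H_1,\ldots,H_k$ with $f(H_1,\ldots,H_k)=0$, as provided by $(1)$. I would argue by induction on $m\in\{0,\ldots,k\}$ the assertion $(P_m)$: for every $s\leqslant m$, every $B_1,\ldots,B_s\in \NS(X)_{\mathbb R}$, and every subset $T\subseteq\{1,\ldots,k\}$ of cardinality $k-s$,
$$f\bigl(B_1,\ldots,B_s,(H_j)_{j\in T}\bigr)=0.$$
The case $m=0$ is exactly the hypothesis, and at $m=k$ the statement $(P_k)$ is the desired universal vanishing.

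For the inductive step, assume $(P_m)$ and fix arbitrary $B_1,\ldots,B_{m+1}\in \NS(X)_{\mathbb R}$ together with a subset $T\subseteq\{1,\ldots,k\}$ of size $k-m-1$; set $\{i_1,\ldots,i_{m+1}\}=\{1,\ldots,k\}\setminus T$. Since the ample cone is open in $\NS(X)_{\mathbb R}$, for $|\epsi_1|,\ldots,|\epsi_{m+1}|$ sufficiently small each $H_{i_l}+\epsi_lB_l$ remains ample, so assumption $(1)$ yields
$$f\bigl(H_{i_1}+\epsi_1B_1,\ldots,H_{i_{m+1}}+\epsi_{m+1}B_{m+1},(H_j)_{j\in T}\bigr)\geqslant 0.$$
Multilinear expansion, together with the symmetry of the intersection product, rewrites this as a sum over subsets $S\subseteq\{1,\ldots,m+1\}$, the $S$-term being
$$\Bigl(\prod_{l\in S}\epsi_l\Bigr)\cdot f\bigl((B_l)_{l\in S},(H_j)_{j\in T\cup\{i_l:\,l\notin S\}}\bigr),$$
an intersection number with $|S|$ free slots and $k-|S|$ slots filled by some of the $H_j$'s. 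For $|S|\leqslant m$ this term vanishes by $(P_m)$, so the inequality collapses to $\epsi_1\cdots\epsi_{m+1}\cdot f(B_1,\ldots,B_{m+1},(H_j)_{j\in T})\geqslant 0$. Flipping the signs of the $\epsi_l$ independently (while staying in the ample cone) makes the product $\epsi_1\cdots\epsi_{m+1}$ take both signs, which forces $f(B_1,\ldots,B_{m+1},(H_j)_{j\in T})=0$. This is $(P_{m+1})$.

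No geometric input is required beyond the openness of the ample cone in $\NS(X)_{\mathbb R}$ together with the multilinearity and symmetry of the intersection pairing. The only delicate point is organizing the inductive hypothesis $(P_m)$ carefully enough that every lower-order term in the binomial expansion is genuinely covered; this is exactly what the auxiliary parameters $s$ and $T$ are for. I expect this purely combinatorial bookkeeping to be the main obstacle.
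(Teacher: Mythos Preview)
Your argument is correct, with one small wording slip: when you write ``assumption $(1)$ yields $f(\ldots)\geqslant 0$,'' you actually mean the standing hypothesis of the lemma (nonnegativity for \emph{all} ample tuples), not condition $(1)$ itself.

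The paper takes a different and more direct route. Rather than perturbing by small $\epsi_l B_l$ and invoking openness of the ample cone, it replaces one slot at a time by the following trick: given any ample $A_1$, pick $m\gg 0$ with $mH_1-A_1$ ample; then
\[
0\ \leqslant\ \delta\cdot(mH_1-A_1)\cdot H_2\cdots H_k\ =\ m(\delta\cdot H_1\cdots H_k)-\delta\cdot A_1\cdot H_2\cdots H_k\ =\ -\,\delta\cdot A_1\cdot H_2\cdots H_k\ \leqslant\ 0,
\]
forcing equality. Iterating over the remaining slots gives $(2)$, and $(3)$ follows by continuity since nef divisors are limits of ample $\mathbb Q$-divisors. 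Your approach buys a genuinely stronger conclusion---identical vanishing of the multilinear form on all of $\NS(X)_{\mathbb R}$, not just on the nef cone---at the cost of the inductive bookkeeping you flag. The paper's method avoids the multilinear expansion entirely (no induction on the number of free slots is needed, since each replacement step uses only the single equality obtained at the previous step), but must handle the nef boundary by a separate limit argument.
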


\begin{proof}
Since every nef divisor is a limit of ample ($\mathbb{Q}$-)divisors, by continuity, we see that (2) implies (3). Hence  we only need  to prove that (1) implies (2). Let $H_1,...,H_k$  be ample divisors such that  $\delta\cdot H_1\cdot \cdots \cdot H_k= 0$, and let $A_1,..., A_k$ be any ample divisors. We need to prove that $\delta\cdot A_1\cdot \cdots \cdot A_k= 0$.

Let $m>0$ be a natural number such that $mH_1-A_1$ is still an ample divisor. Then we have 
\begin{eqnarray*}
0 &\leqslant& \delta\cdot (mH_1-A_1)\cdot \cdots \cdot H_k \\
&=& m(\delta\cdot H_1\cdot \cdots \cdot H_k) - (\delta\cdot A_1\cdot H_2 \cdot \cdots \cdot H_k)\\
&=& -\delta\cdot A_1\cdot H_2\cdot \cdots \cdot H_k \\
&\leqslant& 0.
\end{eqnarray*} Thus $\delta\cdot A_1\cdot H_2\cdot  \cdots \cdot H_k =0.$  By repeating this procedure $k-1$ more times, we can obtain that $\delta\cdot A_1\cdot \cdots \cdot A_k =0.$
\end{proof}

Now we can prove Theorem \ref{thm-gen-ample}.

\begin{proof}[{Proof of Theorem \ref{thm-gen-ample}}]
First we assume that $X$ is rationally connected.  Assume by contradiction that $T_X$ is not generically ample. Then there is a Mehta-Ramanathan-general curve  $C$ such that $T_X|_C$ is nef but not ample.  By \cite[Theorem 2.4]{Har71}, there is a non-zero quotient bundle of $T_X|_C$ of degree zero.  Thus, by Mehta-Ramanathan theorem (see \cite[Theorem 6.1]{MR82}), we have $\mu_{\alpha,min}(T_X)=0$, where $\alpha$ is the class of $C$. 
This implies that there is a surjective morphism $T_X\to Q$ such that $Q$ is a non-zero torsion-free sheaf and that $\alpha \cdot c_1(Q)=0.$ 
We note that $Q$ is generically nef as well  and hence $c_1(Q)\cdot H_1\cdot \cdots \cdot H_{n-1} \geqslant 0$ for any ample divisors $H_1,...,H_{n-1}$. Thus by Lemma \ref{lem-vanishing}, $c_1(Q)\cdot H_1\cdot \cdots \cdot H_{n-1} = 0$ for any ample divisors $H_1,...,H_{n-1}$. This shows that $c_1(Q)$ is numerically zero. Since $X$ is rationally connected, it is simply connected. Therefore, we have $\mathrm{det}\, Q \cong \sO_X.$ The injective  morphism $Q^*\to \Omega_X^1$ then induces a non-zero morphism $\sO_X\to \Omega_X^k$, where $k$ is the rank of $Q$. This is a contradiction, since $h^0(X,\Omega_X^k)=0$ (see \cite[Corollary IV.3.8]{Kol96}).

Now we assume that $X$ is not rationally connected. Then, by \cite[Corollary 1]{Zhang05}, there is a dominant rational map $f \colon X\dashrightarrow Y$ such that $Y$ is smooth with Kodaira dimension $\kappa(Y)=0$ and that  general fibers of $f$ are proper,  rationally connected. Since $X$ is not rationally connected, $Y$ has positive dimension $d$. There is some positive integer $m$ such that $h^0(Y, \sO_Y(mK_Y))\neq 0$. 
Thus $h^0(X, (\Omega_X^1)^{\otimes md}) \neq 0.$ Let $C$ be a Mehta-Ramanathan-general curve and let $\alpha$ be the class of $C$. Then $\mu_{\alpha, max}(\Omega_X^1) \geqslant 0$ by \cite[Corollary 5.11]{CP11}. This implies that $\mu_{\alpha, min}(T_X)  \leqslant 0$. Hence  $T_X|_C$ is not ample by  Mehta-Ramanathan theorem.
\end{proof}

\begin{rem}
\label{rem-sing-nef-rc-gn}
Theorem \ref{thm-gen-ample} does not hold without assuming the smoothness of $X$. For example, let $G$ be the group $\mathbb{Z}/2\mathbb{Z}$ and let $E$ be an elliptic curve with an action of $G$ such that $E/G=\p^1$. We also endow $\p^1$ with the canonical action of $G$ ($g.[a:b]=[a:-b]$ if $g$ is the generator of $G$). 
Then $G$ acts on the product $E\times \p^1$ diagonally and the quotient map $E\times \p^1 \to (E\times \p^1)/G=X$ is \'etale in codimension $1$. 
We see that  $X$ is singular and  $-K_X$ is nef. In addition, as in   \cite[Remark and Question 3.8]{GKP14}, we have $h^0(X, ((\Omega_X^1)^{\otimes 2})^{**}) > 0.$ This implies that $\mu_{\alpha, min}(T_X) \leqslant 0$ for any ample class $\alpha$.
\end{rem}

\section{Equality conditions of Miyaoka inequality}
\label{section:Miyaoka}

As mentioned before the proof of Corollary \ref{cor-c2}, a non-zero torsion-free sheaf $E$ on a projective manifold $X$ of dimension $n$ is said to be generically $(H_1,...,H_{n-2})$-semipositive for some ample divisors $H_1,...,H_{n-2}$ if for each nef divisor $D$, we have $\mu_{\alpha,min}(E)\geqslant 0$, where $\alpha$ is the class of $D\cdot H_1\cdot \cdots \cdot H_{n-2}$.   

In \cite[Theorem 6.1]{Miy87}, Miyaoka proved that if  $c_1(E)$ is nef and if $E$  is generically $(H_1,...,H_{n-2})$-semipositive, then $c_2(E)\cdot H_1\cdot \cdots \cdot H_{n-2}\geqslant 0$.  As a consequence,  if $c_1(E)$ is nef and if $E$ is generically nef, then $c_2(E)\cdot H_1\cdot \cdots \cdot H_{n-2}\geqslant 0$  for any  ample divisors $H_1,...,H_{n-2}$.  

In this section, we will study the equality conditions of these inequalities. 
This is crucial for the proof of Theorem \ref{thm-classification}.  
We will assume that  $E$ is generically nef and that $c_1(E)$ is nef. By  Lemma \ref{lem-vanishing},  the equality $c_2(E)\cdot H_1\cdot \cdots \cdot H_{n-2}= 0$  holds for some  ample divisors $H_1,...,H_{n-2}$ if and only if $c_2(E)\cdot H_1\cdot \cdots \cdot H_{n-2}= 0$  for any ample divisors $H_1,...,H_{n-2}$. Thus, in order to study the equality conditions, we may assume that $c_2(E)\cdot H^{n-2}=0$ for some ample divisor $H$.

Our idea is to look into the details in Miyaoka's proof, and study every inequality inside. We will discuss  following  the numerical dimension $\nu(c_1(E))$  of $c_1(E)$. Recall that the numerical dimension $\nu$ of a nef divisor $N$  is the largest integer such that $N^{\nu+1} \equiv 0$.

\subsection{Case of $\nu(c_1(E)) \geqslant 2$} We first consider the case when $c_1(E)$ has numerical dimension at least $2$, and  show the following proposition.

\begin{prop}
\label{prop-HN-E-ne-0-nd>1}
Let $X$ be a smooth projective variety of dimension $n\geqslant 2$.  Let  $E$ be a non-zero torsion-free sheaf of rank at least $2$ on $X$ which is generically $(H_1,...,H_{n-2})$-semipositive for some ample divisors $H_1,...,H_{n-2}$.  Assume that $c_1(E)$ is nef with numerical dimension at least $2$. Let $\alpha$ be the class of $c_1(E)\cdot H_1\cdots H_{n-2}$.  If $c_2(E)\cdot H_1\cdot \cdots \cdot H_{n-2}=0$, then the Harder-Narasimhan filtration of $E$ with respect to $\alpha$ is of the form  $$0\subsetneq E_1 \subsetneq E,$$  such that 
\begin{enumerate}
\item $E_1$ is an invertible sheaf such that $c_1(E_1)\equiv c_1(E)$;
\item $c_1(E/E_1)\equiv 0;$
\item $c_2(E/E_1)\cdot H_1\cdot \cdots \cdot H_{n-2}=0.$
\end{enumerate}
\end{prop}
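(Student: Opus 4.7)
My plan is to analyze the Harder--Narasimhan filtration of $E$ with respect to $\alpha$ by combining the Bogomolov--Gieseker inequality on each graded piece with the Hodge-index inequality on a general surface slice, then leveraging Lefschetz to upgrade a numerical proportionality from the slice to all of $X$.

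Let $0 = E_0 \subsetneq E_1 \subsetneq \cdots \subsetneq E_k = E$ denote the HN filtration with respect to $\alpha$, with graded pieces $F_i$ of rank $r_i$ and slope $\mu_i$, so $\mu_1 > \cdots > \mu_k$. Generic semipositivity forces $\mu_k \geq 0$, and the numerical dimension hypothesis gives $\sum r_i \mu_i = c_1(E)^2 \cdot H_1 \cdots H_{n-2} > 0$, so $\mu_1 > 0$. After replacing each $H_i$ by a sufficiently large multiple---which merely rescales $\alpha$ and preserves every hypothesis---I may assume the $H_i$ very ample enough that a smooth general surface $S = H_1 \cap \cdots \cap H_{n-2}$ yields the Lefschetz injection $N^1(X)_{\mathbb Q} \hookrightarrow N^1(S)_{\mathbb Q}$.

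The core estimate proceeds as follows. Each $F_i$ is $\alpha$-semistable, so the Bogomolov--Gieseker inequality in the mixed polarization (equivalently after restriction to $S$) gives $(2r_i c_2(F_i) - (r_i-1)c_1(F_i)^2) \cdot H_1 \cdots H_{n-2} \geq 0$. Expanding $c_2(E) = \sum c_2(F_i) + \sum_{i<j} c_1(F_i) c_1(F_j)$, using $c_1(E) = \sum c_1(F_i)$ and the hypothesis $c_2(E) \cdot H_1 \cdots H_{n-2} = 0$, a short manipulation yields
$$\sum_i \frac{c_1(F_i)^2 \cdot H_1 \cdots H_{n-2}}{r_i} \geq c_1(E)^2 \cdot H_1 \cdots H_{n-2}.$$
The Hodge-index inequality on $S$---valid since $(c_1(E)|_S)^2 > 0$---gives $c_1(F_i)^2 \cdot H_1 \cdots H_{n-2} \leq (r_i \mu_i)^2 / (c_1(E)^2 \cdot H_1 \cdots H_{n-2})$, and combining produces
$$\sum_i r_i \mu_i^2 \geq \left( \sum_i r_i \mu_i \right)^2.$$

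From $\mu_i \leq \mu_1$ I obtain $\sum r_i \mu_i^2 \leq \mu_1 \sum r_i \mu_i$, hence $\mu_1 \geq \sum r_i \mu_i$; from $\mu_i \geq 0$ I obtain $r_1 \mu_1 \leq \sum r_i \mu_i$, hence $\mu_1 \leq \sum r_i \mu_i / r_1$. Together these force $r_1 = 1$ and $\mu_i = 0$ for $i \geq 2$, so by the strict decrease of HN slopes $k = 2$. Thus $E_1$ is a saturated rank-one subsheaf of $E$ on a smooth variety, hence a line bundle. All intermediate inequalities are equalities, so the equality case of Hodge index forces $c_1(E_1)|_S \equiv c_1(E)|_S$ on $S$, and Lefschetz promotes this to $c_1(E_1) \equiv c_1(E)$ on $X$, giving (1) and (2). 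Conclusion (3) follows from the Whitney relation $c_2(E) = c_2(E_1) + c_2(E/E_1) + c_1(E_1)\cdot c_1(E/E_1)$: $c_2(E_1) = 0$ since $E_1$ is invertible, and $c_1(E_1) \cdot c_1(E/E_1) \cdot H_1 \cdots H_{n-2} = 0$ since $c_1(E/E_1) \equiv 0$, leaving $c_2(E/E_1) \cdot H_1 \cdots H_{n-2} = c_2(E) \cdot H_1 \cdots H_{n-2} = 0$.

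The main obstacle is the Bogomolov--Gieseker inequality for the mixed polarization involving the nef but possibly non-ample class $c_1(E)$, together with the careful tracking of equality cases needed to conclude Hodge-index proportionality for the top graded piece. Promoting that proportionality from $S$ to a numerical equivalence on all of $X$ is the other delicate point, which hinges on the preliminary reduction to very ample $H_i$ and the resulting Lefschetz injectivity.
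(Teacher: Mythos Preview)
Your proof is correct and follows essentially the same route as the paper: expand $c_2(E)\cdot S$ via Whitney, apply Bogomolov--Gieseker to each graded piece, bound $c_1(F_i)^2\cdot S$ by Hodge index, and analyze the resulting inequality $\sum r_i\mu_i^2\geqslant(\sum r_i\mu_i)^2$ to force $r_1=1$, $k=2$, and $\mu_2=0$. The only cosmetic difference is in the endgame: you extract $c_1(E_1)\equiv c_1(E)$ from the Hodge-index equality case for $E_1$ on $S$ and then invoke Lefschetz injectivity, whereas the paper instead applies Bogomolov--Gieseker once more to $E/E_1$ to obtain $c_1(E/E_1)^2\cdot S\geqslant 0$ and combines this with the Sylvester signature of the form $\mathbf{q}(\delta,\delta')=\delta\cdot\delta'\cdot S$ on $N^1(X)$ to conclude $c_1(E/E_1)\equiv 0$ directly; your derivation of~(3) from Whitney is likewise a slight shortcut over the paper's appeal to its Lemma on quotients.
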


For the proof of the proposition, we need the following lemma.

\begin{lemma}
\label{lem-c2-quotient=0}
Let $X$ be a smooth projective variety of dimension $n\geqslant 2$. Let $E$ be a non-zero torsion-free sheaf on $X$ which is generically $(H_1,...,H_{n-2})$-semipositive for some ample divisors $H_1,...,H_{n-2}$. Assume that $c_1(E)$ is nef and  $$c_2(E)\cdot H_1\cdot \cdots \cdot H_{n-2}=0.$$  Let $0\to F\to E\to Q\to 0$ be an exact sequence of non-zero torsion-free sheaves. If $c_1(Q)\equiv 0$, then  $F$ is generically $(H_1,...,H_{n-2})$-semipositive, and  $$c_2(Q)\cdot H_1\cdot \cdots \cdot H_{n-2}=c_2(F)\cdot H_1\cdot \cdots \cdot H_{n-2}=0.$$
\end{lemma}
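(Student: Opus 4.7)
The plan is to reduce the equality $c_2(E)\cdot H_1\cdots H_{n-2}=0$ to a sum of two non-negative terms using the Whitney product formula, and then identify those terms with $c_2(F)\cdot H_1\cdots H_{n-2}$ and $c_2(Q)\cdot H_1\cdots H_{n-2}$. Since $X$ is smooth, the total Chern class is multiplicative on short exact sequences, so $c_2(E)\equiv c_2(F)+c_1(F)\cdot c_1(Q)+c_2(Q)$. The hypothesis $c_1(Q)\equiv 0$ kills the cross term, giving $c_2(E)\cdot H_1\cdots H_{n-2}=c_2(F)\cdot H_1\cdots H_{n-2}+c_2(Q)\cdot H_1\cdots H_{n-2}$. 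Thus it suffices to show both summands on the right are non-negative; combined with the vanishing of the left-hand side, this forces both to vanish.

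For $c_2(Q)\cdot H_1\cdots H_{n-2}\geqslant 0$, I would first observe that any torsion-free quotient of $Q$ is also a torsion-free quotient of $E$, so $\mu_{\alpha,\min}(Q)\geqslant \mu_{\alpha,\min}(E)\geqslant 0$ for every $\alpha=D\cdot H_1\cdots H_{n-2}$ with $D$ nef. Hence $Q$ is itself generically $(H_1,\dots,H_{n-2})$-semipositive. Since $c_1(Q)\equiv 0$ is in particular nef, Miyaoka's inequality (\cite[Theorem 6.1]{Miy87}) applied to $Q$ yields $c_2(Q)\cdot H_1\cdots H_{n-2}\geqslant 0$.

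The only non-routine step is the corresponding statement for $F$: I claim $F$ is also generically $(H_1,\dots,H_{n-2})$-semipositive, and here the pushout trick is the key. Given any torsion-free quotient $F\twoheadrightarrow Q_F$ with kernel $K$, the quotient $E/K$ sits in an exact sequence
\[
0\to Q_F\to E/K\to Q\to 0,
\]
and is torsion-free since $Q_F$ and $Q$ are. Hence $E/K$ is a torsion-free quotient of $E$, and generic semipositivity of $E$ gives $c_1(E/K)\cdot \alpha\geqslant 0$ for every $\alpha$ of the form considered. But $c_1(E/K)=c_1(Q_F)+c_1(Q)\equiv c_1(Q_F)$, so $\mu_{\alpha}(Q_F)\geqslant 0$. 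Taking infima shows $F$ is generically $(H_1,\dots,H_{n-2})$-semipositive. As $c_1(F)=c_1(E)-c_1(Q)\equiv c_1(E)$ is nef, Miyaoka's inequality then gives $c_2(F)\cdot H_1\cdots H_{n-2}\geqslant 0$, and together with the Whitney identity above, both second Chern numbers must vanish. I expect the pushout/torsion-freeness check to be the only subtle point; everything else is bookkeeping with the Whitney formula and Miyaoka's theorem.
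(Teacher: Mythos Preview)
Your proof is correct and follows essentially the same approach as the paper: the paper also uses the Whitney formula together with $c_1(Q)\equiv 0$ to reduce to showing $c_2(F)\cdot H_1\cdots H_{n-2}\geqslant 0$ and $c_2(Q)\cdot H_1\cdots H_{n-2}\geqslant 0$ via Miyaoka's inequality, and proves generic semipositivity of $F$ by the same observation that $c_1(E/K)\equiv c_1(F/K)$ for any saturated subsheaf $K\subseteq F$ (your ``pushout trick'' is exactly this, with the torsion-freeness of $E/K$ made explicit).
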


\begin{proof}
 Since $c_1(Q)\equiv 0$, we have $c_1(E/G)\equiv c_1(F/G)$ for any saturated subsheaf $G$ of $F$. This implies that $F$ is  generically $(H_1,...,H_{n-2})$-semipositive and $c_1(F)\equiv c_1(E)$ is nef. We have $$c_2(E)=c_2(F)+c_2(Q)+c_1(F)\cdot c_1(Q)\equiv c_2(F)+c_2(Q).$$ We note that $Q$ is also generically $(H_1,...,H_{n-2})$-semipositive.   By Miyaoka inequality (see \cite[Theorem 6.1]{Miy87}),  $c_2(F) \cdot H_1\cdot \cdots \cdot H_{n-2} \geqslant 0$ and $c_2(Q) \cdot H_1\cdot \cdots \cdot H_{n-2} \geqslant 0.$ We obtain that $$0= c_2(E)\cdot H_1\cdot \cdots \cdot H_{n-2}=c_2(F) \cdot H_1\cdot \cdots \cdot H_{n-2} +c_2(Q) \cdot H_1\cdot \cdots \cdot H_{n-2} \geqslant 0.$$  This implies that  $$c_2(Q)\cdot H_1\cdot \cdots \cdot H_{n-2}=c_2(F)\cdot H_1\cdot \cdots \cdot H_{n-2}=0.$$
\end{proof}

\begin{proof}[{Proof of Proposition  \ref{prop-HN-E-ne-0-nd>1}  }]

 Since $c_1(E)$ is nef and has numerical dimension at least $2$,  $c_1(E)^2\cdot H_1 \cdot \cdots \cdot  H_{n-2}>0.$   In particular, the class $\alpha$ is not zero. Since $c_2(E)\cdot H_1\cdot \cdots \cdot H_{n-2}=0$, and since $E$ has rank at least $2$, Bogomolov-Gieseker inequality for semistable sheaves (see \cite[Corollary 4.7]{Miy87}) shows that $E$ is not $\alpha$-semistable.   Let $$0=E_0\subsetneq E_1 \subsetneq \cdots \subsetneq E_r=E $$ be the Harder-Narasimhan semistable filtration. We note that (1) implies (2),  and hence  (3) by Lemma \ref{lem-c2-quotient=0}.  
Therefore,  we only need to prove  that the  filtration has length $r=2$ and satisfies  property (1).

Without loss of generality, we may assume that $H_1,...,H_{n-2}$ are effective very ample divisors in general position. Let $S$ be the intersection surface of $H_1,...,H_{n-2}$.  For simplicity, we let $G_i=E_i/E_{i-1}$ and $r_i=\mathrm{rank}\, G_i$ for $i=1,...,r$.  We have 
\begin{eqnarray*}
2c_2(E)\cdot S &=&  (\sum_{i=1}^r 2c_2(G_i) + \sum _{1\leqslant i < j \leqslant r}  2c_1(G_i) c_1 (G_j))\cdot S\\
                       &= &  (\sum_{i=1}^r 2c_2(G_i) +  c_1(E)^2 -\sum _{ i  =1}^r   c_1(G_i)^2 )\cdot S.
\end{eqnarray*}

Since each $G_i$ is $\alpha$-semistable, Bogomolov-Gieseker inequality   (see \cite[Corollary 4.7]{Miy87}) shows that 
\begin{eqnarray*} 
2c_2(E)\cdot S &\geqslant&  (\sum_{i=1}^r \frac{r_i-1}{r_i} c_1(G_i)^2 +  c_1(E)^2 -\sum _{ i  =1}^r   c_1(G_i)^2)\cdot S\\
  &=& (  c_1(E)^2 -\sum _{ i  =1}^r  \frac{1}{r_i} c_1(G_i)^2)\cdot S.     
\end{eqnarray*}

Since $c_1(E)^2\cdot S >0$,   Hodge index theorem on $S$ shows that $$(c_1(E)^2\cdot S )(c_1(G_i)^2 \cdot S)\leqslant ( c_1(E) \cdot c_1(G_i) \cdot S)^2.$$  Therefore, 
\begin{equation}\label{eq:hodge:index} 
c_1(G_i)^2 \cdot S \leqslant   \frac{ (c_1(E) \cdot c_1(G_i) \cdot S)^2}{c_1(E)^2\cdot S }.
\end{equation}
We let $$a_i =  \frac{ c_1(E) \cdot c_1(G_i) \cdot S }{r_i c_1(E)^2\cdot S}.$$ By the definition of Harder-Narashimhan filtration, we see that $a_1>\cdots > a_r$.   Moreover, we note that $\sum_{i=1}^r r_i a_i=1$ and $a_i\geqslant 0$ for all $i$.  Hence $a_i\leqslant 1$ for all $i$.  The inequality (\ref{eq:hodge:index}) becomes  $$c_1(G_i)^2 \cdot S \leqslant   r_i^2a_i^2  c_1(E)^2\cdot S.$$  Therefore, we have  
\begin{eqnarray*}
2c_2(E)\cdot S &\geqslant& (  c_1(E)^2 -\sum _{ i  =1}^r  \frac{1}{r_i} c_1(G_i)^2)\cdot S  \\
                        &\geqslant& (  c_1(E)^2 -\sum _{ i  =1}^r  \frac{1}{r_i} \cdot  r_i^2a_i^2  c_1(E)^2 )\cdot S \\
                        &=&   (  c_1(E)^2 -\sum _{ i  =1}^r     r_ia_i^2  c_1(E)^2 )\cdot S\\
                        &=&(1- \sum _{ i  =1}^r     r_ia_i^2) c_1(E)^2\cdot S\\
                        &\geqslant& (1- \sum _{ i  =1}^r     r_ia_ia_1) c_1(E)^2\cdot S\\
                        &=& (1- a_1) c_1(E)^2\cdot S.
\end{eqnarray*}

By assumption,  $c_2(E)\cdot S=0$ and  $c_1(E)^2\cdot S>0$. Since $a_1\leqslant 1$, the inequality above shows that  $a_1=1$. We recall that $\sum_{i=1}^r r_ia_i=1$, $a_i\geqslant 0$ for all $i$ and $a_1>\cdots > a_r$. Hence we can only have $r=2$, $r_1=1$ and $a_2=0$.

It remains to prove that $c_1(E/E_1)\equiv 0$.  On the one hand,  we have  $$c_1(E/E_1)\cdot c_1(E) \cdot S =  c_1(E)^2\cdot S - c_1(E_1)\cdot c_1(E)\cdot S = 0.$$  Since   $c_2(E_1)=0$, we obtain that
\begin{eqnarray*}
 0 = c_2(E) \cdot S &=& (c_1(E_1)\cdot c_1(E/E_1) + c_2(E/E_1))\cdot S \\ 
 &=& (c_1(E)\cdot c_1(E/E_1)-c_1(E/E_1)^2+c_2(E/E_1))\cdot S\\
 &=& (-c_1(E/E_1)^2+c_2(E/E_1))\cdot S.
\end{eqnarray*}
Since $E/E_1=G_2$ is semistable, Bogomolov-Gieseker inequality shows that $$c_2(E/E_1)\cdot S \geqslant \frac{r_2-1}{2r_2} c_1(E/E_1)^2 \cdot S.$$  Therefore, we have   $$0=(-c_1(E/E_1)^2+c_2(E/E_1))\cdot S \geqslant (-1+\frac{r_2-1}{2r_2}) c_1(E/E_1)^2 \cdot S,$$ which implies that $c_1(E/E_1)^2 \cdot S \geqslant 0.$

On the other hand, from  Lefschetz theorem, we see that  the symmetric bilinear form   $\mathbf{q}(\delta, \delta')=\delta \cdot \delta' \cdot S$ defined on $N^1(X)$, the space of real numerical divisors classes, is nondegenerate. By Hodge index theorem,   $\mathbf{q}$ has exactly one positive eigenvalue. We also have $\mathbf{q}(c_1(E),c_1(E))>0$, and $\mathbf{q}(c_1(E),c_1(E/E_1))=0.$ Hence, by Sylvester theorem,  $$c_1(E/E_1)^2\cdot S= \mathbf{q}(c_1(E/E_1),c_1(E/E_1))\leqslant 0,$$ and the equality holds if and only if $c_1(E/E_1)\equiv 0$.  This completes the proof of the proposition.
\end{proof}

\subsection{Case of $\nu(c_1(E)) =1$}

Next we will consider the case when $\nu(c_1(E)) = 1$.  In this case, $E$ is semistable with respect to the class $c_1(E)\cdot H^{n-2}$, and the Harder-Narasimhan filtration with respect to this class   does not  provide any further information. 
Instead, we will use curve classes of the form  $(c_1(E)+ \epsilon H)^{n-1}$ with $\epsilon>0$, and  prove the following  proposition.

\begin{prop}
\label{prop-HN-E-ne-nd=1}
Let $X$ be a smooth projective variety of dimension $n\geqslant 2$.  Let  $E$ be a non-zero torsion-free sheaf such  that $c_1(E)$ is nef with numerical dimension    $1$, and that $E$ is generically  nef. Assume that $c_2(E)\cdot H^{n-2}=0$ for some ample divisor $H$.  Then there is a filtration   $$0=E_0\subsetneq E_1 \subsetneq \cdots  \subsetneq E_r= E$$  such that 
\begin{enumerate}
\item for each $\epsilon>0$ small enough, it is the Harder-Narasimhan filtration with respect to the class $$\alpha_\epsilon=(c_1(E)+\epsilon H)^{n-1}.$$   
\item  $c_1(E_k/E_{k-1})$ is nef and numerically proportional to  $c_1(E)$  for any $k=1,...,r$;
\item   $c_2(E_k/E_{k-1})\cdot (c_1(E)+\epsilon H)^{n-2}=0$ for any $k=1,...,r$ and any $\epsilon>0$ small enough.
\end{enumerate}
\end{prop}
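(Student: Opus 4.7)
The plan is to mimic the strategy of Proposition \ref{prop-HN-E-ne-0-nd>1}, but to replace the fixed polarization $c_1(E)\cdot H^{n-2}$ by the perturbed family $\alpha_\epsilon$ and reduce the numerical analysis to a general complete intersection surface. Set $L=c_1(E)$ and $L_\epsilon=L+\epsilon H$. Since $\nu(L)=1$, we have $L^2\cdot\gamma=0$ for every codimension-$(n-2)$ class $\gamma$, so $\mu_{L\cdot H^{n-2}}(E)=0$; combined with generic nefness, this makes $E$ semistable with respect to $L\cdot H^{n-2}$ of slope zero.

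I first address (1). Using $\nu(L)=1$, the binomial expansion collapses to
$$\alpha_\epsilon \;=\; \epsilon^{n-1}H^{n-1}+(n-1)\epsilon^{n-2}\,L\cdot H^{n-2},$$
whence for every saturated subsheaf $F\subseteq E$,
$$\mu_{\alpha_\epsilon}(F)=\epsilon^{n-2}\bigl[(n-1)\mu_{L\cdot H^{n-2}}(F)+\epsilon\,\mu_{H^{n-1}}(F)\bigr].$$
By the zero-slope semistability above, $\mu_{L\cdot H^{n-2}}(F)\leqslant 0$ for every saturated $F$, so for small $\epsilon$ only subsheaves with $c_1(F)\cdot L\cdot H^{n-2}=0$ can appear in the HN filtration, and within that class the ordering depends only on $\mu_{H^{n-1}}$. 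A boundedness argument (saturated subsheaves of $E$ with bounded $\mu_{H^{n-1}}$ form a bounded family) then produces an $\epsilon_0>0$ such that, for every $\epsilon\in(0,\epsilon_0)$, the HN filtration stabilizes to a single chain $0=E_0\subsetneq E_1\subsetneq\cdots\subsetneq E_r=E$ with $c_1(E_k)\cdot L\cdot H^{n-2}=0$ for each $k$.

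For (2), write $c_k=c_1(E_k/E_{k-1})$ and cut out a smooth surface $S\subset X$ by $(n-2)$ general sections of $|mL_\epsilon|$ for some $m\gg 0$ (at a fixed small $\epsilon$); by Mehta-Ramanathan applied to the ample polarization $L_\epsilon$, each $(E_k/E_{k-1})|_S$ remains torsion-free and semistable on $S$. The hypotheses $L^2\equiv 0$, $c_k\cdot L\cdot H^{n-2}=0$, together with $c_2(E)\cdot L_\epsilon^{n-2}=0$ (which follows from $c_2(E)\cdot H^{n-2}=0$ via Lemma \ref{lem-vanishing} applied to Miyaoka's positivity), imply $(L|_S)^2=0$, $c_k|_S\cdot L|_S=0$, and $c_2(E|_S)=0$. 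The Whitney identity gives $\sum (c_k|_S)^2=2\sum c_2((E_k/E_{k-1})|_S)$, and Bogomolov-Gieseker on $S$ produces $2c_2((E_k/E_{k-1})|_S)\geqslant\frac{r_k'-1}{r_k'}(c_k|_S)^2$; summing yields $\sum\tfrac{1}{r_k'}(c_k|_S)^2\geqslant 0$. Since $L|_S$ is non-zero (as $L\cdot L_\epsilon^{n-1}>0$) and isotropic, the Hodge index theorem on $S$, whose equality case is supplied by Lemma \ref{lem-2form-1}, forces $(c_k|_S)^2\leqslant 0$ with equality iff $c_k|_S$ is proportional to $L|_S$. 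Combining, each $(c_k|_S)^2=0$, so $c_k|_S\equiv\mu_k L|_S$ on $S$. The Lefschetz hyperplane theorem (trivial if $n=2$) then lifts this to $c_k\equiv\lambda_k L$ in $N^1(X)$, and the semistability bound $c_k\cdot\alpha_\epsilon=\epsilon^{n-1}c_k\cdot H^{n-1}\geqslant 0$ (from $\mu_{\alpha_\epsilon,\min}(E)\geqslant 0$ by generic nefness) forces $\lambda_k\geqslant 0$, so $c_1(E_k/E_{k-1})$ is nef.

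For (3), the relation $c_k\equiv\lambda_k L$ together with $L^2\equiv 0$ in $N^2(X)$ gives $c_k^2\cdot L_\epsilon^{n-2}=0$. The filtration identity
$$2c_2(E)\cdot L_\epsilon^{n-2}=\sum 2c_2(E_k/E_{k-1})\cdot L_\epsilon^{n-2}+L^2\cdot L_\epsilon^{n-2}-\sum c_k^2\cdot L_\epsilon^{n-2}$$
then reduces to $\sum c_2(E_k/E_{k-1})\cdot L_\epsilon^{n-2}=0$; Bogomolov-Gieseker for the $\alpha_\epsilon$-semistable pieces yields each summand $\geqslant 0$, and hence each vanishes. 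The principal obstacle I anticipate is the combined wall-crossing/stabilization step for (1) and the Mehta-Ramanathan reduction with polarization $L_\epsilon$: one must verify both that a single filtration is HN for all small $\epsilon$ and that the graded pieces retain their semistability after restriction to $S$ so that surface Bogomolov-Gieseker applies uniformly.
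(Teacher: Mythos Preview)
Your argument is correct and shares the paper's skeleton: establish a common Harder--Narasimhan filtration for all small $\epsilon$, then combine Whitney's formula, Bogomolov--Gieseker, and a Hodge-index signature argument to pin each $c_1(E_k/E_{k-1})$ to a non-negative multiple of $c_1(E)$. The ``boundedness argument'' you invoke for (1) is precisely Lemma~\ref{lem-common-filtration}.

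The genuine difference lies in how (2) is executed. You restrict to an actual surface $S$ cut by $|mL_\epsilon|$, appeal to a Mehta--Ramanathan-type hypersurface restriction theorem (note: not the curve statement you name, but its iterated divisorial form) to keep the graded pieces semistable on $S$, run Bogomolov--Gieseker and Hodge index on $N^1(S)$, and then lift proportionality back to $N^1(X)$ via Lefschetz. The paper sidesteps all of this: since $\nu(L)=1$, the class $\alpha_\epsilon$ is a positive multiple of $(L+\eta H)\cdot H^{n-2}$ for a suitable $\eta>0$, so Miyaoka's Bogomolov--Gieseker inequality can be applied directly on $X$ with the mixed polarisation $(L+\eta H,H,\dots,H)$ (packaged as Lemma~\ref{lem-filtration-c2}), and the signature argument is carried out on $N^1(X)$ itself via the non-degenerate form $\delta\cdot\delta'\cdot H^{n-2}$ (Lemma~\ref{lem-HN-E-ne-nd=1}). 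What this buys is that no restriction theorem and no Lefschetz injectivity are needed, and the two lemmas become reusable verbatim in the later Jordan--H\"older analysis of Proposition~\ref{prop-BG-equality-cond}. Your route works, but the restriction-plus-lift step you correctly flag as the ``principal obstacle'' is exactly what the paper's rewriting of $\alpha_\epsilon$ eliminates.
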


The proof of the proposition consists of several steps. The existence of common  Harder-Narasimhan filtration follows from the following lemma.

\begin{lemma}
\label{lem-common-filtration}
Let $X$ be a normal projective $\mathbb{Q}$-factorial variety of dimension $n\geqslant 2$. Let $D_1,...,D_{n-1}$ be nef divisors   and let $H$ be an ample divisors.  Let $E$ be a torsion-free sheaf on $X$. Then there is  a filtration $$0=E_0\subsetneq E_1 \subsetneq \cdots  \subsetneq E_r= E$$ such that  for any $\epsilon>0$ small enough, it is the Harder-Narasimhan filtration with respect to the class  of $(D_1+{\epsilon} H) \cdot \cdots \cdot (D_{n-1}+{\epsilon} H)$.
\end{lemma}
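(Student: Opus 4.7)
For every $\epsilon>0$ the class $\alpha_\epsilon := \prod_{i=1}^{n-1}(D_i+\epsilon H)$ is ample, hence movable, so the Harder--Narasimhan filtration $E_\bullet^{(\epsilon)}$ of $E$ with respect to $\alpha_\epsilon$ is well defined. The goal is to show that, as a chain of saturated subsheaves of $E$, it stabilises for all $\epsilon$ in a sufficiently small interval $(0,\epsilon_1]$.

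The first ingredient is a polynomial-dependence observation: expanding $\prod_i(D_i+\epsilon H)$ shows that, for every saturated subsheaf $F\subseteq E$,
\[
  \mu_{\alpha_\epsilon}(F) \;=\; \frac{c_1(F)\cdot \prod_i(D_i+\epsilon H)}{\mathrm{rank}\,F}
\]
is a polynomial in $\epsilon$ of degree at most $n-1$, whose coefficients are built from intersection numbers of $c_1(F)$ against monomials in the $D_i$'s and $H$. In particular, for any two saturated subsheaves $F,F'\subseteq E$, the sign of $\mu_{\alpha_\epsilon}(F)-\mu_{\alpha_\epsilon}(F')$ is constant on a right-neighbourhood of $0$ whose length depends on $F,F'$.

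The second, and main, ingredient is a finiteness statement. Fix $\epsilon_0>0$; I would apply Grothendieck's boundedness theorem to the ample class $\alpha_{\epsilon_0}$ to conclude that, for any constant $c$, the set of saturated subsheaves $F\subseteq E$ with $\mu_{\alpha_{\epsilon_0}}(F)\geqslant c$ forms a bounded family, hence realises only finitely many Hilbert polynomials and in particular only finitely many possibilities for the numerical class of $c_1(F)$ in each rank. I would then argue that every subsheaf appearing in $E_\bullet^{(\epsilon)}$ for some $\epsilon\in(0,\epsilon_0]$ has $\mu_{\alpha_{\epsilon_0}}$-slope bounded below by a uniform constant, by controlling the polynomial difference $\mu_{\alpha_{\epsilon_0}}(F)-\mu_{\alpha_\epsilon}(F)$ in terms of intersection numbers that are themselves bounded by data of $E$; this forces the HN members over $(0,\epsilon_0]$ to lie in a bounded family, hence in finitely many numerical types. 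This boundedness step is the main obstacle, since one has to extract a $\epsilon$-uniform bound from the HN inequalities, which are themselves $\epsilon$-dependent.

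Once reduced to a finite pool of candidate subsheaves, I can form the finitely many pairwise slope-difference polynomials of the first paragraph, each of which has finitely many positive roots. Choosing $\epsilon_1\in(0,\epsilon_0]$ smaller than all such positive roots yields a total ordering of candidate slopes that is constant on $(0,\epsilon_1]$, so the maximal destabilising subsheaf of $E$ with respect to $\alpha_\epsilon$ is independent of $\epsilon\in(0,\epsilon_1]$. Induction on $\mathrm{rank}\,E$, applied to the quotient by this maximal destabilising subsheaf, produces the full chain, which is then the common HN filtration for every $\epsilon\in(0,\epsilon_1]$ (shrinking $\epsilon_1$ finitely many times along the induction).
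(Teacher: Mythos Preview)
The paper gives no argument for this lemma; it simply cites \cite[Lemma~6.5]{KMMc04}. However, the paper's proof of the companion Lemma~\ref{lem-common-filtration-stable} is explicitly modelled on that same reference, so one can read off the intended method and compare.

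Your polynomial observation and the inductive scheme at the end are exactly right; that is also how the cited argument is organised. The divergence is in the finiteness step, and there your sketch has a genuine gap --- which you yourself flag. You want to show that the HN pieces for all $\epsilon\in(0,\epsilon_0]$ lie in a bounded family by first bounding $\mu_{\alpha_{\epsilon_0}}$ from below on them. But the quantity $\mu_{\alpha_{\epsilon_0}}(F)-\mu_{\alpha_\epsilon}(F)$ is a combination of intersection numbers $c_1(F)\cdot H^k D_{j_1}\cdots D_{j_{n-1-k}}$, and these are not controlled by $\mu_{\alpha_{\epsilon_0}}(E)$ alone; your phrase ``bounded by data of $E$'' hides exactly the statement you are trying to prove. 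Moreover, the assertion that saturated subsheaves of $E$ with $\mu_{\alpha_{\epsilon_0}}(F)\geqslant c$ form a bounded family with finitely many numerical $c_1$'s is not what Grothendieck's lemma says (it only bounds slopes from above), and making it precise would require extra work.

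The argument in \cite{KMMc04}, visible in the proof of Lemma~\ref{lem-common-filtration-stable}, sidesteps this entirely: one never bounds a family of sheaves, only the \emph{coefficients} of the slope polynomials. Each coefficient of $P_F(\epsilon):=\mu_{\alpha_\epsilon}(F)$ has the form $\frac{1}{\mathrm{rank}\,F}\,c_1(F)\cdot H^k D_{j_1}\cdots D_{j_{n-1-k}}$, an intersection against a product of nef classes; such numbers are bounded above as $F$ ranges over all subsheaves of $E$ (this is just $\mu_{\beta,max}(E)<\infty$ for each fixed movable $\beta$). Since ranks lie in $\{1,\dots,\mathrm{rank}\,E\}$, the set of coefficient vectors is bounded above coordinatewise, and a short combinatorial lemma (\cite[Lemma~6.4]{KMMc04}) then produces one polynomial $P_\eta$ dominating all others for small $\epsilon$. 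The corresponding $F_\eta$ is the common maximal destabilising subsheaf, and your induction finishes the proof. So the fix is not to repair your boundedness step but to drop it: work with the finitely many coefficient slots rather than with families of subsheaves.
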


\begin{proof}
See \cite[Lemma 6.5]{KMMc04}.
\end{proof}

The second and the third properties in Proposition \ref{prop-HN-E-ne-nd=1} are consequences of the two lemmas below.

\begin{lemma}
\label{lem-filtration-c2}
Let $X$ be a smooth projective variety of dimension $n\geqslant 2$.  Let  $E$ be a non-zero torsion-free sheaf. Let $ H_1,...,H_{n-2}$ be ample divisors and let $D$ be a nef divisor such that the class $\alpha$ of $D\cdot H_1\cdot \cdots \cdot H_{n-2}$ is not zero. Assume that
\begin{enumerate}
\item[(i)] $c_1(E)^2\cdot H_1 \cdot \cdots \cdot H_{n-2}=c_2(E)\cdot H_1 \cdot \cdots \cdot H_{n-2}=0;$
\item[(ii)] there is a filtration  of saturated subsheaves $$0=E_0\subsetneq E_1 \subsetneq \cdots  \subsetneq E_r= E$$ such that $E_k/E_{k-1}$ is $\alpha$-semistable and that $$c_1(E_k/E_{k-1})^2\cdot H_1 \cdot \cdots \cdot H_{n-2} \leqslant 0 $$ for each $k=1,...,r$.
\end{enumerate}
Then  we have 
\begin{eqnarray*}
c_1(E_k/E_{k-1})^2 \cdot H_1 \cdot \cdots \cdot H_{n-2} = c_2(E_k/E_{k-1}) \cdot H_1 \cdot \cdots \cdot H_{n-2} =0.
\end{eqnarray*}

\end{lemma}

\begin{proof}
Let $S=H_1 \cdot \cdots \cdot H_{n-2}$. Then  we have $$0=2c_2(E)\cdot S = (\sum_{k=1}^r 2 c_2(E_k/E_{k-1}) + \sum_{1\leqslant j < k \leqslant r} 2 c_1(E_j/E_{j-1}) \cdot c_1(E_{k}/E_{k-1})) \cdot S.$$ 
Since each $E_k/E_{k-1}$ is also $\alpha$-semistable, by Bogomolov-Gieseker inequality,  we deduce that
\begin{eqnarray*}
0&\geqslant&  (\sum_{k=1}^r \frac{d_k-1}{d_k} c_1(E_k/E_{k-1})^2 + \sum_{1\leqslant j < k \leqslant r} 2 c_1(E_j/E_{j-1}) \cdot c_1(E_{k}/E_{k-1})) \cdot S\\
&=&(c_1(E)^2-\sum_{k=1}^r\frac{1}{d_k}c_1(E_k/E_{k-1})^2)\cdot S \\
&=& -\sum_{k=1}^r(\frac{1}{d_k}c_1(E_k/E_{k-1})^2 \cdot S) \\
&\geqslant & 0,
\end{eqnarray*}
where $d_k$ is the rank of $E_k/E_{k-1}$. Therefore, all of the inequalities above are equalities. We conclude hence  $$ c_1(E_k/E_{k-1})^2 \cdot S =0, $$ and  $$c_2(E_k/E_{k-1}) \cdot S = \frac{d_k-1}{2d_k} c_1(E_k/E_{k-1})^2 \cdot S =0,$$ for each $k=1,...,r$. 
\end{proof}

\begin{lemma}
\label{lem-HN-E-ne-nd=1}
Let $X$ be a smooth projective variety of dimension $n\geqslant 2$.  Let  $E$ be a non-zero torsion-free sheaf such   that $c_1(E)$ is nef with numerical dimension   $1$ and that $c_2(E)\cdot H^{n-2}=0$ for some ample an ample divisor $H$.   
Assume that there is a filtration  of saturated subsheaves $$0=E_0\subsetneq E_1 \subsetneq \cdots  \subsetneq E_r= E$$ which satisfies the following two conditions:
\begin{enumerate}
\item[(i)] for each $\epsilon>0$ small enough and each $k=1,...,r$,  the sheaf $E_k/E_{k-1}$ is semistable with respect to the class $$\alpha_\epsilon=(c_1(E)+\epsilon H)^{n-1};$$
\item[(ii)]  for each $\epsilon>0$ small enough and each $k=1,...,r$, $$\mu_{\alpha_\epsilon}(E_k/E_{k-1})\geqslant  0.$$
\end{enumerate} 
Then  $c_1(E_k/E_{k-1})$ is nef and numerically proportional to  $c_1(E)$  for any $k=1,...,r$ .
\end{lemma}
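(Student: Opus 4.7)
The plan is to exploit that $\nu(c_1(E))=1$ forces $c_1(E)^2\equiv 0$ as a codimension-two numerical class, which collapses the class $\alpha_\epsilon=(c_1(E)+\epsilon H)^{n-1}$ to only two surviving monomials, and then to combine an $\epsilon$-expansion of the given semistability data with the Hodge index theorem on $N^1(X)_{\mathbb{R}}$. Writing $L_k:=c_1(E_k/E_{k-1})$, my main computational tool will be the identity
$$\alpha_\epsilon \;=\; (n-1)\,\epsilon^{n-2}\,c_1(E)\cdot H^{n-2}\;+\;\epsilon^{n-1}\,H^{n-1},$$
all other monomials vanishing since they carry $c_1(E)^j$ with $j\geq 2$.

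First I would extract orthogonality relations. Reading off the leading $\epsilon^{n-2}$ coefficient of the inequality $L_k\cdot\alpha_\epsilon\geq 0$ (valid for all small $\epsilon>0$) yields $L_k\cdot c_1(E)\cdot H^{n-2}\geq 0$ for each $k$; summing over $k$ gives $c_1(E)^2\cdot H^{n-2}=0$, so each term must in fact be zero. The subleading coefficient then forces $L_k\cdot H^{n-1}\geq 0$. Running the same expansion on any saturated subsheaf $F\subset E_k/E_{k-1}$ inside the inequality $\mu_{\alpha_\epsilon}(F)\leq \mu_{\alpha_\epsilon}(E_k/E_{k-1})$ and sending $\epsilon\to 0^+$, the leading-order comparison produces $c_1(F)\cdot c_1(E)\cdot H^{n-2}\leq 0$. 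This shows that $E_k/E_{k-1}$ is semistable of slope zero with respect to the class $\beta:=c_1(E)\cdot H^{n-2}$. Combined with the Hodge index theorem, according to which the symmetric bilinear form $(D,D')\mapsto D\cdot D'\cdot H^{n-2}$ on $N^1(X)_{\mathbb{R}}$ is non-degenerate of signature $(1,\rho-1)$, the vanishing of $c_1(E)^2\cdot H^{n-2}$ and $L_k\cdot c_1(E)\cdot H^{n-2}$ yields the Hodge bound $L_k^2\cdot H^{n-2}\leq 0$.

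The main obstacle is promoting this inequality to equality, and this is precisely where Lemma \ref{lem-filtration-c2} enters. I would apply it with $D=c_1(E)$ and $H_1=\cdots=H_{n-2}=H$, so that $\alpha=c_1(E)\cdot H^{n-2}$: hypothesis (i) is guaranteed since $c_1(E)^2\cdot H^{n-2}=0$ and $c_2(E)\cdot H^{n-2}=0$ (both already available), while hypothesis (ii) is exactly the $\beta$-semistability of each $E_k/E_{k-1}$ together with the Hodge bound just established. The conclusion of that lemma then gives $L_k^2\cdot H^{n-2}=0$ for every $k$.

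With all three pairings $c_1(E)^2\cdot H^{n-2}$, $L_k\cdot c_1(E)\cdot H^{n-2}$ and $L_k^2\cdot H^{n-2}$ now vanishing, Lemma \ref{lem-2form-1} applies to the non-degenerate form of signature $(1,\rho-1)$ on $N^1(X)_{\mathbb{R}}$ and shows that $L_k$ and $c_1(E)$ are linearly dependent. Writing $L_k=\lambda_k\, c_1(E)$, the inequalities $L_k\cdot H^{n-1}\geq 0$ and $c_1(E)\cdot H^{n-1}>0$ force $\lambda_k\geq 0$, so each $L_k$ is a nonnegative multiple of the nef class $c_1(E)$, hence itself nef and numerically proportional to $c_1(E)$, as required.
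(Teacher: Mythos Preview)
Your proof is correct and follows essentially the same route as the paper: expand $\alpha_\epsilon$ using $c_1(E)^j\equiv 0$ for $j\ge 2$, extract from condition (ii) the orthogonality $L_k\cdot c_1(E)\cdot H^{n-2}=0$, use the Hodge index theorem on the form $(D,D')\mapsto D\cdot D'\cdot H^{n-2}$ to get $L_k^2\cdot H^{n-2}\le 0$, invoke Lemma~\ref{lem-filtration-c2} to upgrade this to equality, and conclude with Lemma~\ref{lem-2form-1}. The only cosmetic difference is that the paper applies Lemma~\ref{lem-filtration-c2} with the class $(c_1(E)+\eta H)\cdot H^{n-2}$ for small $\eta>0$ (where $\alpha_\epsilon$-semistability is directly given), whereas you first pass to the limit to obtain semistability for the degenerate class $\beta=c_1(E)\cdot H^{n-2}$ and apply the lemma there; both choices are valid.
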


\begin{proof}
We can   assume that $H$ is very  ample.  Let $S$ be the surface  cut out by general members of the linear system of $H$.  
 Since $c_1(E)$ has numerical dimension   $1$, we have $c_1(E)^i\cdot H^{n-i}=0$ for $i\geqslant 2$. 
 Thus, for each $\epsilon>0$, $$\alpha_\epsilon =  \binom{n-1}{2}\epsilon^{n-2} \cdot (c_1(E) +  \frac{\epsilon}{\binom{n-1}{2}} H )\cdot S.$$ 
To simplify, we let $\beta_{\eta}$ be the curve class $$\beta_\eta = (c_1(E)+\eta H)\cdot S$$ for any $\eta >0$. 
Then for any $\eta>0$ small enough and any $k=1,...,r$, condition (i) implies that $E_k/E_{k-1}$ is semistable with respect to  $\beta_\eta$  and condition (ii) implies that $\mu_{\beta_\eta}(E_k/E_{k-1})\geqslant 0.$ 
 In particular, if we tend $\eta$   to zero, then we obtain that  $$c_1(E_k/E_{k-1})\cdot c_1(E) \cdot S \geqslant 0$$ for any $k=1,...,r$. Since $c_1(E)=\sum_{k=1}^r c_1(E_k/E_{k-1})$ and $c_1(E)^2\cdot S = 0$, this shows that $$c_1(E_k/E_{k-1})\cdot c_1(E) \cdot S=0. $$

From  Lefschetz theorem,  
the symmetric bilinear form   $\mathbf{q}(\delta, \delta')=\delta \cdot \delta' \cdot S$ defined on $N^1(X)$ is nondegenerate, where $N^1(X)$ is the space of real numerical divisors classes. By Hodge index theorem, $\mathbf{q}$ has exactly one positive eigenvalue. Since $c_1(E)\not\equiv 0$,  the condition $$\mathbf{q}(c_1(E_k/E_{k-1}),c_1(E))=\mathbf{q}(c_1(E),c_1(E))=0$$  implies that $\mathbf{q}(c_1(E_k/E_{k-1}),c_1(E_k/E_{k-1}))\leqslant 0$ by Sylvester theorem.

Since each $E_k/E_{k-1}$ is semistable with respect to the class $\alpha_\epsilon$ for $\epsilon>0$ small enough, by Lemma \ref{lem-filtration-c2},  we obtain that $$\mathbf{q}(c_1(E_k/E_{k-1}),c_1(E_k/E_{k-1}))=0.$$ Thus, by Lemma \ref{lem-2form-1} below, $c_1(E_k/E_{k-1})$ is numerically proportional to $c_1(E)$ for any $k=1,...,r$. 
Moreover, it is nef since $\mu_{\alpha_\epsilon}(E_k/E_{k-1}) \geqslant  0$ for $\epsilon>0$ small enough.   
\end{proof}

\begin{lemma}
\label{lem-2form-1}
Let $\mathbf{q}(\cdot,\cdot)$ be a nondegenerate symmetric bilinear form of signature $(1,m)$ on a real vector space $V$. Let $\vec{x},\vec{y}\in V$ be two  vectors. Assume that $\mathbf{q}(\vec{x},\vec{x})=0$, $\mathbf{q}(\vec{x},\vec{y})=0$ and $\mathbf{q}(\vec{y},\vec{y})=0$. Then $\vec{x}$ and $\vec{y}$ are linear dependent.
\end{lemma}

\begin{proof}
If $m=0$, then $\mathbf{q}$ is definite and $\vec{x}=\vec{y}=\vec{0}$. We assume then that $m>0$.    By Sylvester theorem, there is an orthogonal basis $(\vec{e},\vec{e}_1,...,\vec{e}_m)$ of $V$ such that $\mathbf{q}(\vec{e},\vec{e})=1$ and that $\mathbf{q}(\vec{e}_i,\vec{e}_i)=-1$ for all $i=1,...,m$.

Let $(a,b_1,...,b_m)$ and $(a',b_1',...,b_m')$ be the coordinates of $\vec{x}$ and $\vec{y}$ respectively. Then by assumption, we have $$a^2=b_1^2+\cdots b_m^2,\ a'^2=b_1'^2+\cdots b_m'^2,\ \mathrm{and}\ aa'=b_1b_1'+\cdots b_mb_m'.$$ Thus we have $$(aa')^2=(b_1b_1'+\cdots +b_mb_m')^2=(b_1^2+\cdots b_m^2)(b_1'^2+\cdots b_m'^2).$$ From the equality condition of Cauchy inequality, this shows that $(b_1,...,b_m)$ and $(b_1',...,b_m')$ are linearly dependent. We then deduce that $(a,b_1,...,b_m)$ and $(a',b_1',...,b_m')$ are linearly dependent.
\end{proof}

Now we can deduce Proposition \ref{prop-HN-E-ne-nd=1}.

\begin{proof}[{Proof of Proposition \ref{prop-HN-E-ne-nd=1}}]
Let $$0=E_0\subsetneq E_1 \subsetneq \cdots  \subsetneq E_r= E$$ be the common Harder-Narasimhan filtration with respect the classes $\alpha_\epsilon$  for all $\epsilon>0$ small enough (see  Lemma \ref{lem-common-filtration}).  Then this filtration satisfies the property (1). Since $E$ is generically nef, $\mu_{\alpha_\epsilon, min }(E) \geqslant 0$ for any $\epsilon>0$. Thus  for each $\epsilon>0$ small enough and each $k=1,...,r$, we have $$\mu_{\alpha_\epsilon}(E_k/E_{k-1}) \geqslant \mu_{\alpha_\epsilon}(E_r/E_{r-1})=\mu_{\alpha_\epsilon, min }(E) \geqslant 0.$$ 
The property (2) then follows from Lemma \ref{lem-HN-E-ne-nd=1}. In particular, we deduce that $c_1(E_k/E_{k-1})^2\cdot H^{n-2}  = 0 $ for each $k=1,...,r$. Thus the  property (3) follows from Lemma \ref{lem-filtration-c2}. 
\end{proof}

For each semistable component $E_k/E_{k-1}$ in Proposition \ref{prop-HN-E-ne-nd=1}, we note that the equality holds in the Bogomolov-Gieseker inequality.  It then follows from \cite[Theorem IV.4.1]{Nakayama2004} that the reflexive hull $(E_k/E_{k-1})^{**}$ is a numerically projectively flat vector bundle. 
Particularly, we will need the following property for these sheaves.

\begin{lemma}
\label{lem-BG-equality-cond}
Let  $X$ be a smooth projective variety of dimension $n>1$ and $H$  an ample divisor.  Let $E$ be a $(H^{n-1})$-semistable torsion-free sheaf of rank $r$. Assume that 
\[(c_2(E) - \frac{r-1}{2r} c_1(E)^2)\cdot H^{n-2} = 0.\]
Then for any rational curve $C$ in $X$, the intersection number  $C \cdot c_1(E)$ is an integer divisible by  $r$.
\end{lemma} 
 
\begin{proof}
By Lemma \ref{lem-c_2-decreasing} below,  we may assume that $E$ is reflexive. Then \cite[Theorem IV.4.1]{Nakayama2004} implies that $E$ is locally free. Moreover, there is a filtration of subbundles $$0 =E_0 \subsetneq E_1 \subsetneq \cdots  \subsetneq E_r= E$$ 
such that each quotient $E_i/E_{i-1}$ is a hermitian projectively flat vector bundle. Moreover,  we   have 
\begin{equation}
\label{equation-c_1}
\frac{1}{r_i} c_1(E_i/E_{i-1}) \equiv \frac{1}{r}c_1(E),
\end{equation} 
where $r_i = \mathrm{rank}\ E_i/E_{i-1}$.
We also note that the projectivized bundle $\mathbb{P}(E_i/E_{i-1})$ is induced by some $\mathrm{PGL}$-representation of the fundamental group $\pi_1(X)$ (see \cite[Corollary IV.4.3]{Nakayama2004}). 

Let $\rho\colon \mathbb{P}^1 \to X$ be the morphism induced by the normalization of $C$. Since $\mathbb{P}^1$ is simply connected, it follows that there is a line bundle $L_i$ on $\mathbb{P}^1$ such that  $\rho^*(E_i/E_{i-1}) \cong L_i^{\oplus r_i}$. 
Hence there is an integer $a_i$ such that $$C\cdot c_1(E_i/E_{i-1}) = r_ia_i.$$ 
In addition, the equation (\ref{equation-c_1}) implies that $a_i$ is independent of $i$. We denote this constant integer by $a$, then we have $C\cdot c_1(E) =ra$.
\end{proof}

The following lemma might be well-known to experts. For the  reader's convenience, we recall briefly the proof here.

\begin{lemma}
\label{lem-c_2-decreasing}
Let $X$ be a smooth  projective  variety of dimension $n\geqslant 2$. Let $E$ be a torsion-free sheaf on $X$. Then for any ample divisors $H_1,...,H_{n-2}$, we have $$c_2(E)\cdot H_1\cdot \cdots \cdot H_{n-2} \geqslant c_2(E^{**})\cdot H_1\cdot \cdots \cdot H_{n-2}.$$ Moreover, the equality holds if and only if $E$ is locally free in codimension $2$.
\end{lemma}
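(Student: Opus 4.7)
The plan is to compare $c_2(E)$ and $c_2(E^{**})$ through the cokernel of the natural inclusion $E \hookrightarrow E^{**}$. Setting $Q := E^{**}/E$ gives the short exact sequence
$$0 \to E \to E^{**} \to Q \to 0.$$
Because $X$ is smooth (hence normal), at any codimension-one point the local ring is a DVR, and any finitely generated torsion-free module over a DVR is free; so $E$ and $E^{**}$ already coincide at every such point, and $Q$ is supported in codimension $\geqslant 2$.

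Since $X$ is smooth, every coherent sheaf admits a finite resolution by locally free sheaves, so the total Chern class is multiplicative in short exact sequences. From $c_t(E^{**}) = c_t(E)\cdot c_t(Q)$, together with $c_1(Q) = 0$ (as $\Supp Q$ has codimension $\geqslant 2$), I deduce $c_1(E^{**}) = c_1(E)$ and $c_2(E^{**}) = c_2(E) + c_2(Q)$. The crucial identity is then $c_2(Q) = -[Z]$, where $[Z]$ is the codimension-two part of $\Supp Q$, with each irreducible component weighted by the length of $Q$ at its generic point. This follows from Grothendieck--Riemann--Roch: for a sheaf $Q$ supported in codimension $\geqslant 2$ one has $\mathrm{ch}_2(Q) = [Z]$, and for a sheaf of vanishing rank and first Chern class the identity $\mathrm{ch}_2 = -c_2$ holds.

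Consequently $c_2(E) - c_2(E^{**}) = [Z]$ is an effective codimension-two cycle, and intersecting with the ample classes $H_1, \dots, H_{n-2}$ gives the desired inequality $c_2(E)\cdot H_1 \cdots H_{n-2} \geqslant c_2(E^{**})\cdot H_1 \cdots H_{n-2}$. For the equality statement, the vanishing $[Z]\cdot H_1 \cdots H_{n-2} = 0$ forces $[Z] = 0$ by the ampleness of the $H_i$, which is equivalent to $Q$ being supported in codimension $\geqslant 3$. Since a reflexive sheaf on a smooth variety is locally free outside a closed subset of codimension at least three (reflexive modules over a two-dimensional regular local ring are free by Auslander--Buchsbaum), this in turn is equivalent to $E$ itself being locally free in codimension two.

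The main obstacle is the identification $c_2(Q) = -[Z]$; the remaining steps are formal consequences of the short exact sequence and of positivity of intersections against ample classes. While well known, this identification genuinely requires either invoking the Grothendieck--Riemann--Roch formalism on the smooth variety $X$, or constructing suitable local Koszul-type resolutions of $Q$ near each generic point of the codimension-two part of its support and then applying the Whitney formula directly.
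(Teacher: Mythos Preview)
Your argument is correct and takes a genuinely different route from the paper. The paper reduces to the surface case: it replaces the $H_i$ by sufficiently ample general divisors, restricts to their intersection surface $S$, checks via a locally free resolution that $c_2(E)\cdot H_1\cdots H_{n-2}=c_2(E|_S)$ and $c_2(E^{**})\cdot H_1\cdots H_{n-2}=c_2((E|_S)^{**})$, and then cites the two-dimensional result \cite[Lemma 10.9]{Meg92} for both the inequality and the equality characterization. By contrast, you work directly on $X$ by analyzing the quotient $Q=E^{**}/E$ and identifying $c_2(E)-c_2(E^{**})$ with the effective cycle $[Z]$ via the Riemann--Roch relation $\mathrm{ch}_2(Q)=[Z]$. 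Your approach is more self-contained and in fact proves the sharper cycle-level statement $c_2(E)-c_2(E^{**})=[Z]$ in the Chow group, from which the numerical inequality and the equality case follow immediately; the paper's method only sees the intersection numbers. The cost is that you must justify $\mathrm{ch}_2(Q)=[Z]$ (e.g.\ Fulton, \emph{Intersection Theory}, Example~15.3.6), whereas the paper outsources the corresponding work to the surface reference.
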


\begin{proof}
 We may assume that $H_1,...,H_{n-2}$ are effective sufficiently ample divisors in general positions. Let $S$ be their intersection. Since $X$ is smooth, there is a finite free resolution of $E$ as follows, $$0\to F_k\to\cdots\to F_0\to E.$$ Since $S$ is in general position, we may assume that $E|_S$ is still  torsion-free, and that  $$0\to F_k|_S\to\cdots\to F_0|_S\to E|_S$$ is again a free resolution. Hence $$c_2(E)\cdot H_1\cdot \cdots \cdot H_{n-2} =c_2(E|_S).$$

By the same argument, we may assume that $E^{**}|_S$ is still reflexive and is isomorphic to $(E|_S)^{**}$.  Moreover, we may also assume that $$c_2(E^{**})\cdot H_1\cdot \cdots \cdot H_{n-2} =  c_2(E^{**}|_S) =  c_2((E|_S)^{**}).$$ By \cite[Lemma 10.9]{Meg92}, we have $$c_2((E|_S)^{**}) \geqslant c_2(E|_S),$$ and the equality holds if and only if $E|_S$ is locally free. Since $S$ is in general position, the sheaf $E|_S$ is locally free if and only $E$ is locally free in codimension $2$. This completes the proof of the lemma.
\end{proof}

\subsection{Case of $\nu(c_1(E))= 0$} We will finish this section with  the case when $\nu(c_1(E)) = 0$. 
We have the following result.

\begin{lemma}
\label{lem-HN-E-ne-nd=0}
Let $X$ be a smooth projective variety of dimension $n\geqslant 2$.  Let  $E$ be a non-zero torsion-free sheaf such  that $c_1(E)\equiv 0$  and that $E$ is generically  nef.  
If we assume further that $c_2(E)\cdot H^{n-2}=0$ for some ample divisor $H$, then the reflexive hull $E^{**}$ is a flat vector bundle. 
\end{lemma}

\begin{proof}
We note that $E$ is  semistable with respect to  the class $H^{n-1}$. 
Moreover, the equality holds in the Bogomolov-Gieseker inequality.     
It then follows from \cite[Theorem IV.4.1]{Nakayama2004} that $E^{**}$  is a numerically flat vector bundle (see \cite[Definition 1.17 and Theorem 1.18]{DPS94}). 
Hence it is a flat vector bundle, see \cite[Section 3]{Simpson1992}.
\end{proof}

\section{Proof of the classification theorem}
\label{section:Yau}

We will finish the proof of Theorem \ref{thm-classification} in this section. 

\begin{prop}
\label{prop-c2=0-q<quotient}
Let $X$ be a  smooth projective variety of dimension $n\geqslant 2$ with nef anticanonical class $-K_X$. Assume that $c_2(T_X)\cdot H^{n-2}=0$ for some ample divisor $H$.  Assume further that there is some non-zero torsion-free quotient $T_X\to Q$ such that $c_1(Q)\equiv 0$ and $\mathrm{rank}\, Q=k$. Then  the augmented irregularity $\tilde{q}(X)$ is at least equal to $k$.
\end{prop}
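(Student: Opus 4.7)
The plan is to use the structure results of the previous section to obtain a filtration of $Q$ whose reflexive graded pieces are flat locally free sheaves, and then to trivialize these on a finite \'etale cover using the Cao--H\"oring structure theorem for projective manifolds with nef anticanonical class.

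First, since $T_X$ is generically nef by Theorem~\ref{thm-main}, its torsion-free quotient $Q$ is generically nef as well. Applying Lemma~\ref{lem-c2-quotient=0} to the exact sequence $0 \to F \to T_X \to Q \to 0$ with $c_1(Q) \equiv 0$ and $c_2(T_X) \cdot H^{n-2} = 0$, one obtains $c_2(Q) \cdot H^{n-2} = 0$. Proposition~\ref{prop-HN-E-ne-nd=0} then applies to $Q$ and yields a Jordan--H\"older filtration $0 = Q_0 \subsetneq Q_1 \subsetneq \cdots \subsetneq Q_s = Q$ with respect to $H^{n-1}$ whose graded pieces $(Q_i/Q_{i-1})^{**}$ are flat locally free sheaves, corresponding to irreducible unitary representations of $\pi_1(X)$.

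Granting a finite \'etale cover $\pi \colon \tilde X \to X$ on which every pullback $\pi^{*}(Q_i/Q_{i-1})^{**}$ is trivial, the surjection $\pi^{*} T_X \twoheadrightarrow \pi^{*} Q$ together with the saturation inside $(\pi^{*} Q)^{**}$ produces a rank-$k$ quotient of $\pi^{*} T_X$ in codimension one whose reflexive hull is trivial. Dualizing yields $k$ linearly independent holomorphic $1$-forms on $\tilde X$, whence $q(\tilde X) \geq k$ and therefore $\tilde q(X) \geq k$.

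The main obstacle is producing the \'etale cover that trivializes all the flat factors. I would invoke the Cao--H\"oring structure theorem~\cite{CH17b}: after a finite \'etale cover, the Albanese morphism of a projective manifold with $-K_X$ nef is a locally trivial analytic fibration over an abelian variety, with fibers of vanishing irregularity. A flat bundle on $\tilde X$ should then restrict trivially to a general fiber (using that the bundle in question is generically nef, has trivial determinant, and that the fiber has vanishing irregularity), hence descend to a flat bundle on the abelian base; semisimple flat bundles on an abelian variety $A$ with vanishing first Chern class are classified by unitary characters of $\pi_1(A) \cong \mathbb{Z}^{2q}$, and a further finite \'etale cover of $A$ trivializes each such character, yielding the desired trivialization after pullback to $\tilde X$. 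The delicate point is to execute this descent compatibly with the sheaf-theoretic quotient structure inherited from $T_X \twoheadrightarrow Q$.
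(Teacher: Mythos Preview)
Your approach contains a genuine error in the final step. You assert that a unitary character of $\pi_1(A)\cong\mathbb{Z}^{2q}$ can be trivialized by a further finite \'etale cover of $A$. This is false: a unitary character $\chi:\mathbb{Z}^{2q}\to U(1)$ factors through a finite quotient if and only if its image is finite, i.e.\ if and only if the corresponding degree-zero line bundle $L_\chi\in\Pic^0(A)$ is torsion. A generic element of $\Pic^0(A)$ is not torsion, and no finite \'etale base change will make it trivial. So even after reducing to a flat bundle on the abelian base, you cannot in general produce the $k$ holomorphic $1$-forms you need on any finite cover.

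The paper's proof circumvents this obstruction by a bootstrapping argument. It first isolates the special case $\tilde q(X)=0$ as Lemma~\ref{lem-c2=0-q=0}: there, after a finite cover, $X$ is genuinely simply connected, so the flat factors of $Q$ \emph{are} trivial and one does obtain a nonzero section of $\Omega_X^1$, a contradiction. For the general statement, the paper passes to the Cao \'etale cover so that the Albanese map $f:X\to A$ has simply connected fibers $F$, and then restricts the quotient $T_X\to Q$ to a general fiber. Because $k>q=\dim A$, the induced map $T_F\to Q|_F$ has nonzero image $R$, and one checks $c_1(R)\equiv 0$ and $c_2(T_F)\cdot(H|_F)^{n-q-2}=0$. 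Now Lemma~\ref{lem-c2=0-q=0} applies \emph{to $F$}, forcing $\tilde q(F)>0$ --- but $F$ is simply connected, contradiction. The point is that the trivialization-of-flat-bundles step is only ever invoked on a simply connected variety, where it is valid; your version attempts it on a variety whose fundamental group is $\mathbb{Z}^{2q}$, where it fails.
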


We recall that the irregularity of a smooth projective variety $X$ is $q(X)=h^1(X,\sO_X)$. It is equal to the dimension of the Albanese variety of $X$.   The augmented irregularity is defined as $$\tilde{q}(X) = \sup \{q(\tilde{X})\ | \  \tilde{X}\to X \mbox{ is a finite {\'e}tale cover} \}.$$ We will first prove  that  $\tilde{q}(X)$ is not zero under the assumption of Proposition \ref{prop-c2=0-q<quotient}.

\begin{lemma}
\label{lem-c2=0-q=0}
Under the condition of Proposition \ref{prop-c2=0-q<quotient}, we have  $\tilde{q}(X)\neq 0.$
\end{lemma}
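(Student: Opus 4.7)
The plan is to first exhibit a nonzero flat locally free subsheaf of $\Omega_X^1$ arising from $Q$, and then use the almost-abelian structure of $\pi_1(X)$ (known for smooth projective manifolds with $-K_X$ nef, by work of Demailly--Peternell--Schneider and P\u aun, see \cite{DPS96,Pau97}) to produce a finite \'etale cover with positive irregularity.

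First I would observe that Theorem~\ref{thm-main} gives generic nefness of $T_X$, which passes to the torsion-free quotient $Q$ (since for a Mehta--Ramanathan-general curve $C$, the sheaf $Q$ is locally free along $C$ and $Q|_C$ is a quotient of the nef bundle $T_X|_C$). Combined with $c_1(Q)\equiv 0$, this forces $Q$ to be $H^{n-1}$-semistable: any destabilizing subsheaf would produce a quotient of strictly negative slope, contradicting generic nefness. Applying Lemma~\ref{lem-c2-quotient=0} to the sequence $0\to F\to T_X\to Q\to 0$ then yields $c_2(Q)\cdot H^{n-2}=0$. Hence Proposition~\ref{prop-HN-E-ne-nd=0} applies to $Q$, producing a Jordan--H\"older filtration whose graded pieces $G_i$ satisfy $c_1(G_i)\equiv 0$ and with $G_i^{**}$ a flat locally free sheaf.

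Next I would take $G:=Q/Q_{r-1}$, the last graded piece, a torsion-free quotient of positive rank $s\leq k$. Composing $T_X\twoheadrightarrow Q\twoheadrightarrow G$ and dualizing yields an injection $G^*\hookrightarrow\Omega_X^1$. Since $G^{**}$ is flat locally free, so is $G^*\cong (G^{**})^*$, and it therefore corresponds to a monodromy representation $\rho:\pi_1(X)\to GL_s(\mathbb{C})$.

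Finally, I would argue by contradiction. Assume $\tilde q(X)=0$. Since $-K_X$ is nef, $\pi_1(X)$ is virtually abelian; choosing a finite-index abelian subgroup $A\subseteq\pi_1(X)$ and passing to the corresponding \'etale cover $\tilde X\to X$, one has $\pi_1(\tilde X)^{\mathrm{ab}}=A$, which must be finite, since otherwise $H^1(\tilde X,\mathbb{Q})\ne 0$ and $q(\tilde X)\geq 1$. Thus $\pi_1(X)$ itself is finite, so $\rho$ has finite image. Letting $\pi:\hat X\to X$ be the finite \'etale cover associated with $\ker\rho$, the pullback $\pi^*G^*$ has trivial monodromy, hence $\pi^*G^*\cong\sO_{\hat X}^s$. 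Because $\pi$ is \'etale, $\pi^*\Omega_X^1=\Omega_{\hat X}^1$, so the inclusion $G^*\hookrightarrow\Omega_X^1$ pulls back to $\sO_{\hat X}^s\hookrightarrow\Omega_{\hat X}^1$, giving $q(\hat X)\geq s\geq 1$, contradicting $\tilde q(X)=0$. The main obstacle is invoking the virtually-abelian structure of $\pi_1(X)$ in the precise form needed; once granted, the rest is a direct assembly of the structural results established earlier in the paper.
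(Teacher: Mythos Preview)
Your proof is correct and follows essentially the same route as the paper: reduce to a flat locally free quotient via Lemma~\ref{lem-c2-quotient=0} and Proposition~\ref{prop-HN-E-ne-nd=0}, then use the structure of $\pi_1(X)$ to trivialize it on a finite cover and produce a holomorphic $1$-form. The only notable difference is that the paper invokes \cite[Theorem~2]{Pau97} directly to conclude that $\tilde q(X)=0$ forces $\pi_1(X)$ to be finite (and then passes to the simply connected cover before constructing the flat sheaf), whereas you reconstruct this finiteness from the virtually-abelian property; both arguments are valid and amount to the same input.
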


\begin{proof}
Assume by contradiction that $\tilde{q}(X)=0$. Then by \cite[Theorem 2]{Pau97}, $X$ has finite fundamental group. By replacing $X$ by some finite \'etale cover if necessary, we may assume that $X$ is simply connected. 

By Theorem \ref{thm-main}, $T_X$ is generically nef. Thus so is the quotient $Q$.  From Lemma \ref{lem-c2-quotient=0}, we deduce that $c_2(Q)\cdot H^{n-2}=0$.   By Lemma \ref{lem-HN-E-ne-nd=0}, the reflexive hull   $Q^{**}$ is a flat  locally free  sheaf.  Since $X$ is simply connected, this implies that  $Q^{**}$ is isomorphic to a direct sum of copies of $\sO_X$.   We obtain then a non-zero morphism $\sO_X\to \Omega_X^1.$ This shows that $q(X)=h^0(X,\Omega_X^1)\geqslant 1$, which is a contradiction.
\end{proof}

Now we will prove  Proposition \ref{prop-c2=0-q<quotient}.

\begin{proof}[{Proof of Proposition \ref{prop-c2=0-q<quotient}}]
By replacing $X$ by some finite \'etale cover if necessary, we may assume that  the Albanese morphism $f \colon X\to A$ induces an isomorphism of fundamental groups (see \cite[Theorem 2]{Pau97}).   Then   \cite[Theorem 1.2]{Cao19} implies that $f$ is an  isotrivial fibration and every fiber of $f$ is simply connected. As a consequence, we have  $$\tilde{q}(X)=q(X)=\mathrm{dim}\, A=q.$$  

Assume  by contradiction that the proposition does not hold. Then we must have $n\geqslant k > q$.    There is an  exact sequence $$0\to T_{X/A} \to T_X \to f^*T_A \to 0.$$ 
Let $F$ be a  general  fiber of $f$ and let $i \colon F\to X$ be the natural injection. Then $\tilde{q}(F)=0$ as $F$ is simply connected. 

If $\mathrm{dim}\, F=1$, then $q = \mathrm{dim}\ A = n-1$, and we can only have  $n=k=q+1$. 
In particular, $T_X=Q$ and $c_1(X)\equiv 0$.  This shows that $F$ is an elliptic curve, which is a contradiction. Hence we have $\mathrm{dim}\, F \geqslant 2$.

We  note that $F$ is a smooth projective manifold with nef anticanonical class $-K_F$. Hence $T_F$ is generically nef by Theorem \ref{thm-main}.
By restricting the exact sequence above to $F$, we can obtain an exact sequence $$0\to T_F \to T_{X}|_F \to (\sO_F)^{q}\to 0.$$ In particular, we have $c_2(T_F)=i^*c_2(T_X)$. Since $F$ is numerically equivalent to the complete intersection of nef divisors, by Corollary \ref{cor-c2} and Lemma \ref{lem-vanishing}, the vanishing condition on $c_2(T_X)$ implies that  
$$c_2(T_F)\cdot (H|_F)^{n-q-2}=c_2(T_X|_F)\cdot (H|_F)^{n-q-2}=c_2(T_X)\cdot H^{n-q-2}\cdot F=0.$$

Since $F$ is a general fiber, we may assume that  $Q|_F$ is torsion-free and  that $$c_1(Q|_F)=i^* c_1(Q) \equiv 0.$$   
We note that the induced morphism $T_F\to Q|_F$ is non-zero for $k>q$. 
Let $R$ be its image.  
We claim that $c_1(R)\equiv 0.$       
Let $\beta=(H|_F)^{n-1-q}$ be a curve class in $F$. 
Then  $ \mu_{\beta}(R)\geqslant 0$ for $T_F$ is generically nef.   
Moreover, by applying Theorem \ref{thm-main-mov} to the class of  $\beta$ in $X$, we have $$\mu_{\beta, max}(Q|_F)\geqslant \mu_{\beta, min}(Q|_F)\geqslant 0.$$  
However, since $c_1(Q|_F)\equiv 0$,   we must have $$\mu_{\beta, max}(Q|_F)= \mu_{\beta, min}(Q|_F) = \mu_{\beta}(Q|_F)= 0.$$   
Thus the condition $ \mu_{\beta}(R)\geqslant 0$    implies that  $\mu_{\beta}(R)=0$ as $R$ is a subsheaf of $Q|_F$. Since $R$ is a quotient of $T_F$, it is generically nef, and hence  $c_1(R)\equiv 0$ by Lemma \ref{lem-vanishing}. 
Thanks to Lemma \ref{lem-c2=0-q=0}, we obtain that $\tilde{q}(F)>0.$ This is a contradiction as $F$ is simply connected.
\end{proof}

Now we can conclude Theorem \ref{thm-classification}.

\begin{proof}[{Proof of Theorem \ref{thm-classification}}]
Since  the second Chern class of an abelian variety is zero, we see that   (2) implies (1) in the theorem. 
We will  prove  that (1) implies (2).  The case when $n=1$ is trivial. We assume from now on that $n\geqslant 2$.  If $K_X\equiv 0$, then by Beauville-Bogomolov decomposition theorem (see \cite[Th\'eor\`eme 1]{Bea83}), there is a finite \'etale cover $X'\to X$ such that $X'$ is isomorphic to a product of irreducible holomorphic symplectic manifolds, Calabi-Yau manifolds and an abelian variety. The vanishing condition (1) on $c_2(T_X)$ then implies that $X'$ is an abelian variety. 

We will now assume that $K_X\not\equiv 0$. Then $X$ is uniruled.  By replacing $X$ by some finite \'etale cover if necessary, we may assume that  the Albanese morphism $f \colon X\to A$ induces an isomorphism of fundamental groups (see \cite[Theorem 2]{Pau97}).   Then   \cite[Theorem 1.2]{Cao19} implies that $f$ is an  isotrivial fibration and every fiber of $f$ is simply connected. As a consequence, we have  $$\tilde{q}(X)=q(X)=\mathrm{dim}\, A=q.$$   Since $X$ is uniruled,  $f$ is not an isomorphism.  Moreover, the fibers of $f$ are   uniruled.

Let $H$ be an ample divisor in $X$. 
By Lemma \ref{lem-vanishing}, we have $c_2(T_X)\cdot H^{n-2}=0$.  Thanks to Theorem \ref{thm-main-mov}, we know that $T_X$ is generically $(H,...,H)$-semipositive.
  
First we assume that $K_X^2\cdot H^{n-2}\neq 0,$ that is, $-K_X$ has numerical dimension at least $2$. Then by Proposition \ref{prop-HN-E-ne-0-nd>1}, there is a torsion-free quotient $T_X\to Q$ such that $c_1(Q)\equiv 0$ and $\mathrm{rank} \, Q=n-1$. Hence by Proposition \ref{prop-c2=0-q<quotient}, the augmented irregularity of $X$ satisfies $q \geqslant n-1$.  Since we have assume that $f \colon X\to A$ is not an isomorphism, we have $q=n-1$. Then $f \colon X\to A$ is a $\p^1$-bundle.

Now we assume that $K_X^2\cdot H^{n-2} = 0.$ Since $-K_X\not\equiv 0$, this means that $-K_X$ has numerical dimension $1$. Let $$0\subsetneq E_1 \subsetneq \cdots  \subsetneq E_r= T_X$$ be the common Harder-Narasimhan filtration with respect to  $$(-K_X+\epsilon H)^{n-1}$$ for all $\epsilon >0$, as in Proposition \ref{prop-HN-E-ne-nd=1}.

Since $X$ is uniruled, there is an elementary  contradiction $g \colon X\to Y$ of some $K_X$-negative extremal ray $R$. We assume that $R$ is generated by a rational curve $C$ of minimal degree with respect to $-K_X$.  
By Lemma \ref{lem-BG-equality-cond}, for each $k=1,...,r$, there is some integer $a_k$   such that $$C 
\cdot c_1(E_k/E_{k-1}) = l_ka_k,$$ where   $l_k = \mathrm{rank} \, (E_k/E_{k-1})$. 
We note that $a_k\geqslant 0$ for all $k$ since $c_1(E_k/E_{k-1})$ is nef.
Hence we have $$-C\cdot K_X = \sum_{k=1}^r l_ka_k.$$

We will discuss   two cases. In the first case, we assume that 
\begin{enumerate}
\item[-] either there is some $k$ such that $l_k\geqslant 2$ and $a_k \neq0$,
\item[-] or there are $k\neq k'$ such that $a_k \neq 0$  and $a_{k'} \neq 0$. 
\end{enumerate}
The assumptions above imply that $-C\cdot K_X \geqslant 2$, that is, the length  of $R$ is at least $2$.  
Moreover, since  the numerical dimension of $-K_X$ is  $1$, and since $-K_X$ is ample on every fiber of the elementary contraction $g$, we obtain that   every fiber of $g$ has dimension at most $1$.   By \cite[Theorem 1.1]{Wis91}, we have $\mathrm{dim}\, Y< n$. Since every fiber of $g$ has dimension at most $1$, we obtain that $\mathrm{dim}\, Y= n-1$.  In addition, $g$ is a conic bundle and $Y$ is smooth by \cite[Theorem 3]{Ando84}. Since the length of $R$ is at least $2$, we obtain that $g$ is smooth.  This implies that    $g_*(K_X^2)\equiv -4K_Y$ (see \cite[Section 4.11]{Miyanishi83}), and thus  $K_Y\equiv 0$.  
Since $g$ has relative dimension $1$ and since $K_Y\equiv 0$, we have $c_2(T_X)\equiv g^*c_2(T_Y)$. The vanishing condition on $c_2(T_X)$ then shows that $c_2(T_Y)\cdot H'_1\cdots H'_{n-3}=0$ for any ample divisors $H'_1,...,H'_{n-3}$ on $Y$ if $\mathrm{dim}\, Y\geqslant 2$. Hence, there is a finite \'etale cover $Y'\to Y$ such that $Y'$ is an abelian variety. As a consequence, $X\times_Y Y'$ is   a $\p^1$-bundle over an abelian variety.

We will now study the remaining case. The sum  $-C\cdot K_X = \sum_{k=1}^r l_ka_k$   satisfies both of the following two conditions:
\begin{enumerate}
\item[(i)] for all $k$, if $a_k\neq  0$, then $l_k=1$;
\item[(ii)] there is at most one $k$ such that $a_k\neq 0$.
\end{enumerate}
From the definition of Harder-Narasimhan filtration, the condition (ii) above implies that the filtration has length $r\leqslant 2$. Since $K_X\not\equiv 0$, we can only have $r=2$ and $c_1(E/E_1)\equiv 0$. The condition (i) above then implies that $\mathrm{rank}\, E_1=1$. Thus, by Proposition \ref{prop-c2=0-q<quotient}, the augmented irregularity of $X$ is at least $n-1$. We conclude then $f \colon X\to A$ is a $\p^1$-bundle.
\end{proof}


\renewcommand\refname{Reference}
\bibliographystyle{alpha}
\bibliography{references}

\begin{thebibliography}{BDPP13}

\bibitem[AD13]{AD13}
Carolina Araujo and St\'ephane Druel.
\newblock On {F}ano foliations.
\newblock {\em Adv. Math.}, 238:70--118, 2013.

\bibitem[And84]{Ando84}
Tetsuya Ando.
\newblock On the extremal ray of higher-dimensional varieties.
\newblock {\em Proc. Japan Acad. Ser. A Math. Sci.}, 60(6):205--208, 1984.

\bibitem[BDPP13]{BDPP13}
S{\'e}bastien Boucksom, Jean-Pierre Demailly, Mihai P{\u{a}}un, and Thomas
  Peternell.
\newblock The pseudo-effective cone of a compact {K}\"ahler manifold and
  varieties of negative {K}odaira dimension.
\newblock {\em J. Algebraic Geom.}, 22(2):201--248, 2013.

\bibitem[Bea83]{Bea83}
Arnaud Beauville.
\newblock Vari\'et\'es {K}\"ahleriennes dont la premi\`ere classe de {C}hern
  est nulle.
\newblock {\em J. Differential Geom.}, 18(4):755--782, 1983.

\bibitem[Cao13]{Cao13}
Junyan Cao.
\newblock A remark on compact {K}{\"a}hler manifolds with nef anticanonical
  bundles and its applications.
\newblock {\em arXiv preprint arXiv:1305.4397}, 2013.

\bibitem[Cao19]{Cao19}
Junyan Cao.
\newblock Albanese maps of projective manifolds with nef anticanonical bundles.
\newblock {\em Ann. Sci. \'{E}c. Norm. Sup\'{e}r. (4)}, 52:1137--1154, 2019.

\bibitem[CDP15]{CDP15}
F\'ed\'eric Campana, Jean-Pierre Demailly, and Thomas Peternell.
\newblock Rationally connected manifolds and semipositivity of the {R}icci
  curvature.
\newblock In {\em Recent advances in algebraic geometry}, volume 417 of {\em
  London Math. Soc. Lecture Note Ser.}, pages 71--91. Cambridge Univ. Press,
  Cambridge, 2015.

\bibitem[CH17]{CH17}
Junyan Cao and Andreas H\"oring.
\newblock Manifolds with nef anticanonical bundle.
\newblock {\em J. Reine Angew. Math.}, 724:203--244, 2017.

\bibitem[Cla17]{Clau17}
Beno{\^i}t Claudon.
\newblock {P}ositivit{\'e} du cotangent logarithmique et conjecture de
  {S}hafarevich-{V}iehweg.
\newblock {\em Ast{\'e}risque}, 390:27--63, 2017.
\newblock {S}{\'e}minaire {B}ourbaki 2015/2016, no. 1105.

\bibitem[CP11]{CP11}
Fr{\'e}d{\'e}ric Campana and Thomas Peternell.
\newblock Geometric stability of the cotangent bundle and the universal cover
  of a projective manifold.
\newblock {\em Bull. Soc. Math. France}, 139(1):41--74, 2011.
\newblock With an appendix by Matei Toma.

\bibitem[CP15]{CP15a}
Fr\'ed\'eric Campana and Mihai P{\u a}un.
\newblock Orbifold generic semi-positivity: an application to families of
  canonically polarized manifolds.
\newblock {\em Ann. Inst. Fourier (Grenoble)}, 65(2):835--861, 2015.

\bibitem[CP19]{CP19}
Fr{\'e}d{\'e}ric Campana and Mihai P{\u a}un.
\newblock Foliations with positive slopes and birational stability of orbifold
  cotangent bundles.
\newblock {\em Publ. Math. Inst. Hautes \'Etudes Sci.}, 129:1--49, 2019.

\bibitem[CZ13]{CZ13}
Meng Chen and Qi~Zhang.
\newblock On a question of {D}emailly-{P}eternell-{S}chneider.
\newblock {\em J. Eur. Math. Soc. (JEMS)}, 15(5):1853--1858, 2013.

\bibitem[DPS93]{DPS93}
Jean-Pierre Demailly, Thomas Peternell, and Michael Schneider.
\newblock K\"ahler manifolds with numerically effective {R}icci class.
\newblock {\em Compositio Math.}, 89(2):217--240, 1993.

\bibitem[DPS94]{DPS94}
Jean-Pierre Demailly, Thomas Peternell, and Michael Schneider.
\newblock Compact complex manifolds with numerically effective tangent bundles.
\newblock {\em J. Algebraic Geom.}, 3(2):295--345, 1994.

\bibitem[DPS96]{DPS96}
Jean-Pierre Demailly, Thomas Peternell, and Michael Schneider.
\newblock Compact {K}\"ahler manifolds with {H}ermitian semipositive
  anticanonical bundle.
\newblock {\em Compositio Math.}, 101(2):217--224, 1996.

\bibitem[DPS01]{DPS01}
Jean-Pierre Demailly, Thomas Peternell, and Michael Schneider.
\newblock Pseudo-effective line bundles on compact {K}\"ahler manifolds.
\newblock {\em Internat. J. Math.}, 12(6):689--741, 2001.

\bibitem[Dru17]{Druel2017}
St\'{e}phane Druel.
\newblock On foliations with nef anti-canonical bundle.
\newblock {\em Trans. Amer. Math. Soc.}, 369(11):7765--7787, 2017.

\bibitem[GKP14]{GKP14}
Daniel Greb, Stefan Kebekus, and Thomas Peternell.
\newblock Reflexive differential forms on singular spaces. {G}eometry and
  cohomology.
\newblock {\em J. Reine Angew. Math.}, 697:57--89, 2014.

\bibitem[Gue16]{Guenan16}
Henri Guenancia.
\newblock Semistability of the tangent sheaf of singular varieties.
\newblock {\em Algebr. Geom.}, 3(5):508--542, 2016.

\bibitem[Har71]{Har71}
Robin Hartshorne.
\newblock Ample vector bundles on curves.
\newblock {\em Nagoya Math. J.}, 43:73--89, 1971.

\bibitem[KMM04]{KMMc04}
Sean Keel, Kenji Matsuki, and James M{$^c$}Kernan.
\newblock Corrections to: ``{L}og abundance theorem for threefolds'' [{D}uke
  {M}ath. {J}. {\bf 75} (1994), no. 1, 99--119; mr1284817].
\newblock {\em Duke Math. J.}, 122(3):625--630, 2004.

\bibitem[Kol96]{Kol96}
J{\'a}nos Koll{\'a}r.
\newblock {\em Rational curves on algebraic varieties}, volume~32 of {\em
  Ergebnisse der Mathematik und ihrer Grenzgebiete. 3. Folge. A Series of
  Modern Surveys in Mathematics}.
\newblock Springer-Verlag, Berlin, 1996.

\bibitem[Meg92]{Meg92}
G{\'a}bor Megyesi.
\newblock Chern classes of {$\mathbb{Q}$}-sheaves.
\newblock In {\em Flips and abundance for algebraic threefolds. A summer
  seminar at the University of Utah, Salt Lake City, 1991}, chapter~10, pages
  115--126. Paris: Soci\'et\'e Math\'ematique de France, Astérisque. 211,
  1992.

\bibitem[Miy83]{Miyanishi83}
Masayoshi Miyanishi.
\newblock Algebraic methods in the theory of algebraic threefolds---surrounding
  the works of {I}skovskikh, {M}ori and {S}arkisov.
\newblock In {\em Algebraic varieties and analytic varieties ({T}okyo, 1981)},
  volume~1 of {\em Adv. Stud. Pure Math.}, pages 69--99. North-Holland,
  Amsterdam, 1983.

\bibitem[Miy87]{Miy87}
Yoichi Miyaoka.
\newblock The {C}hern classes and {K}odaira dimension of a minimal variety.
\newblock In {\em Algebraic geometry, {S}endai, 1985}, volume~10 of {\em Adv.
  Stud. Pure Math.}, pages 449--476. North-Holland, Amsterdam, 1987.

\bibitem[MR82]{MR82}
V.~B. Mehta and A.~Ramanathan.
\newblock Semistable sheaves on projective varieties and their restriction to
  curves.
\newblock {\em Math. Ann.}, 258(3):213--224, 1981/82.

\bibitem[Nak04]{Nakayama2004}
Noboru Nakayama.
\newblock {\em Zariski-decomposition and abundance}, volume~14 of {\em MSJ
  Memoirs}.
\newblock Mathematical Society of Japan, Tokyo, 2004.

\bibitem[P{\u a}u97]{Pau97}
Mihai P{\u a}un.
\newblock Sur le groupe fondamental des vari\'et\'es k\"ahl\'eriennes compactes
  \`a classe de {R}icci num\'eriquement effective.
\newblock {\em C. R. Acad. Sci. Paris S\'er. I Math.}, 324(11):1249--1254,
  1997.

\bibitem[P{\u a}u12]{Pau12}
Mihai P{\u a}un.
\newblock {Relative adjoint transcendental classes and {A}lbanese maps of
  compact {K}aehler manifolds with nef {R}icci curvature}.
\newblock {\em arXiv preprint arXiv:1209.2195}, 2012.

\bibitem[Pet12]{Pet12}
Thomas Peternell.
\newblock Varieties with generically nef tangent bundles.
\newblock {\em J. Eur. Math. Soc. (JEMS)}, 14(2):571--603, 2012.

\bibitem[Sim92]{Simpson1992}
Carlos~T. Simpson.
\newblock Higgs bundles and local systems.
\newblock {\em Inst. Hautes \'{E}tudes Sci. Publ. Math.}, (75):5--95, 1992.

\bibitem[Wi{\'s}91]{Wis91}
Jaros{\l}aw~A. Wi{\'s}niewski.
\newblock On contractions of extremal rays of {F}ano manifolds.
\newblock {\em J. Reine Angew. Math.}, 417:141--157, 1991.

\bibitem[Xie04]{Xie04}
Qihong Xie.
\newblock On pseudo-effectivity of the second {C}hern classes for smooth
  threefolds.
\newblock {\em Manuscripta Math.}, 115(1):101--116, 2004.

\bibitem[Yau93]{Yau93}
Shing-Tung Yau.
\newblock Open problems in geometry.
\newblock In {\em Differential geometry: partial differential equations on
  manifolds ({L}os {A}ngeles, {CA}, 1990)}, volume~54 of {\em Proc. Sympos.
  Pure Math.}, pages 1--28. Amer. Math. Soc., Providence, RI, 1993.

\bibitem[Zha96]{Zhang96}
Qi~Zhang.
\newblock On projective manifolds with nef anticanonical bundles.
\newblock {\em J. Reine Angew. Math.}, 478:57--60, 1996.

\bibitem[Zha05]{Zhang05}
Qi~Zhang.
\newblock On projective varieties with nef anticanonical divisors.
\newblock {\em Math. Ann.}, 332(3):697--703, 2005.

\end{thebibliography}

\end{document}